\newcommand{\measurerestr}{%
  \,\raisebox{-.127ex}{\reflectbox{\rotatebox[origin=br]{-90}{$\lnot$}}}\,%
}
\newcommand*\circled[1]{\tikz[baseline=(char.base)]{
            \node[shape=circle,draw,inner sep=2pt] (char) {#1};}}
\newtheorem{Theorem}{Theorem}[section]
\newtheorem{Corollary}[Theorem]{Corollary}
\newtheorem{Lemma}[Theorem]{Lemma}
\newtheorem{Proposition}[Theorem]{Proposition}
\theoremstyle{definition}
\newtheorem{Definition}[Theorem]{Definition}
\theoremstyle{remark}
\newtheorem{Remark}[Theorem]{Remark}
\def \O {\Omega}
\def \bO {\partial \Omega}
\def \R {\mathbb{R}}
\def \Rn {\mathbb{R}^n}
\def \S{\mathbb{S}}
\def \e {\epsilon}
\def \a {\alpha}
\def \t {\tau}
\def \dist{\mathrm{dist}}
\def \tr{\mathrm{tr}}
\def \det{\mathrm{det}}
\def \Id{\mathrm{Id}}
\newcommand{\norm}[1]{\lvert #1\rvert}
\newcommand{\Norm}[1]{\lVert #1\rVert}
\newcommand{\inner}[2]{\langle #1, #2\rangle}
\def\hstar{h^*}
\def\G{G}
\DeclareMathOperator{\MTW}{MTW}
\def\tmax{t_{\max}}
\def\MTWconst{\sigma_{\text{MTW}}}
\begin{document}

\title[Perturbative approach to parabolic optimal transport]{A perturbative approach to the parabolic optimal transport problem for non-MTW costs}
\author[F. Abedin and J. Kitagawa]{Farhan Abedin$^{*}$ and Jun Kitagawa$^{**}$} 
\address{Department of Mathematics, University of Utah, Salt Lake City, UT 84112}
\email{abedinf@math.utah.edu}
\address{Department of Mathematics, Michigan State University, East Lansing, MI 48824}
\email{kitagawa@math.msu.edu}
\subjclass[2020]{35B20, 35K96, 49Q22}
\thanks{$^{*}$FA acknowledges support through an AMS--Simons Travel Grant for the period 2020--2022. \\ $^{**}$JK's research was supported in part by National Science Foundation grant DMS-2000128.}
\begin{abstract}
Fix a pair of smooth source and target densities $\rho$ and $\rho^*$ of equal mass, supported on bounded domains $\O,\O^* \subset \Rn$. Also fix a cost function $c_0 \in C^{4,\a}(\overline{\O} \times \overline{\O^*})$ satisfying the weak regularity criterion of Ma, Trudinger, and Wang, and assume $\O$ and $\O^*$ are uniformly $c_0$- and $c_0^*$-convex with respect to each other. We consider a parabolic version of the optimal transport problem between $(\O,\rho)$ and $(\O^*,\rho^*)$ when the cost function $c$ is a sufficiently small $C^4$ perturbation of $c_0$, and where the size of the perturbation depends on the given data. Our main result establishes global-in-time existence of a solution $u \in C^2_xC^1_t(\overline\O \times [0, \infty))$ of this parabolic problem, and convergence of $u(\cdot,t)$ as $t \to \infty$ to a Kantorovich potential for the optimal transport map between $(\O,\rho)$ and $(\O^*,\rho^*)$ with cost function $c$. A noteworthy aspect of our work is that $c$ does \emph{not} necessarily satisfy the weak Ma-Trudinger-Wang condition.
\end{abstract}

\maketitle

\section{Introduction}\label{sec:intro}

The optimal transport problem is intimately tied to the theory of  second-order elliptic equations via a fully nonlinear PDE of Monge-Amp\`ere type coupled with the so-called second boundary condition \cite{MaTrudingerWang05}. This connection creates a natural method for proving the existence of optimal maps: solve a parabolic version of this PDE whose stationary state is a solution of the elliptic problem, and let the time variable tend to infinity. We refer to this as the \emph{parabolic optimal transport problem}; see \eqref{PDE} in Section \ref{sec: preliminaries} for a precise formulation. Such an asymptotic approach has been shown to work in the papers \cite{KimStreetsWarren12, Kitagawa12, AbedinKitagawa20, Schnurer03}, and provides a natural algorithm for approximating optimal maps owing to its fast rate of convergence to the steady state \cite{KimStreetsWarren12, AbedinKitagawa20}.

Existing results for the parabolic optimal transport problem make crucial use of the so-called weak Ma-Trudinger-Wang (MTW) condition (see Definition \ref{def: MTW}). This is a sign condition on a fourth-order tensor related to the cost function that is necessary for the regularity of optimal maps even in the stationary case \cite{Loeper09}. Due to the fact that the weak-MTW condition is difficult to verify and is not satisfied by many cost functions, it is a natural question to determine if, given fixed source and target measures, one can solve the parabolic optimal transport problem when the cost function fails to satisfy \eqref{A3} by a quantifiable amount. Our results in this paper show that, given a fixed pair of source and target measures and a cost function $c_0$ which satisfies \eqref{A3}, the solution to the parabolic problem \eqref{PDE} exists for all time and converges to a solution of the elliptic problem when the cost function $c$ is a sufficiently small $C^4$ perturbation of $c_0$; the smallness of this perturbation depends on the source and target measures and other structural quantities. We stress that $c$ does \emph{not} necessarily satisfy \eqref{A3}. 

Our main theorem is as follows. We refer to Section \ref{sec: preliminaries} for precise definitions and terminology.

\begin{Theorem}\label{thm : main} 
Let $\O, \O^* \subset \Rn$ be smooth, bounded domains and let $\rho(x) \ dx$, $\rho^*(y) \ dy$, be absolutely continuous measures supported on $\O, \O^*$ respectively satisfying  \eqref{densitybounds} and \eqref{massbalance}. Suppose there exists $r_0 > 0$ such that the cost function $c_0 \in C^{4,\a}(\overline{\O} \times \overline{\mathcal{N}_{r_0}(\O^*)})$ for some $\a \in (0,1]$ satisfies the conditions \eqref{bi-twist}, \eqref{mixedHessianinvertible}, and \eqref{A3}. Assume, in addition that the domains $\O, \O^*$  satisfy \eqref{cconvexity} and \eqref{cstarconvexity} with respect to $c_0$. Then there exists a constant $\hat{R} > 0$ depending only the structure (see Definition \ref{def: structural constants}) such that if $c \in C^{4,\a}(\overline{\O} \times \overline{\mathcal{N}_{r_0}(\O^*)})$ satisfies $||c - c_0||_{C^4(\overline{\O} \times \overline{\mathcal{N}_{r_0}(\O^*)})} \leq \hat{R}$, then there exists a locally uniformly $c$-convex function $u_{\text{initial}} \in C^{4,\a}(\O)$ satisfying \eqref{boundarycompatbility} such that a  solution $u\in C^2_x C^1_t(\overline{\O}\times [0, \infty))$ of the flow \eqref{PDE} exists for all time and $u(\cdot,t)$ converges in $C^2(\overline\O)$ as $t \to \infty$ to a $c$-convex function $u^{\infty}$ which is a Kantorovich potential for the optimal transport problem between $(\rho, \O)$ and $(\rho^*, \O^*)$ with cost $c$.
\end{Theorem}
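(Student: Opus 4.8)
\medskip
\noindent\textit{Proof plan.}
The guiding idea is that, although $c$ need not obey \eqref{A3}, the whole argument can be run inside a small Hölder-norm neighborhood of the \emph{smooth} stationary solution attached to the unperturbed cost $c_0$; there the flow \eqref{PDE} is uniformly parabolic and its long-time behavior is governed by the linearization, so \eqref{A3} is used exactly once --- to produce the reference solution for $c_0$ via the classical elliptic theory --- and never for $c$ itself. The admissible perturbation size $\hat R$ is essentially the radius of this neighborhood, quantified by the structural constants of Definition \ref{def: structural constants}: how strictly $c_0$ satisfies \eqref{A3}, the $c_0$- and $c_0^*$-convexity moduli of $\O,\O^*$ from \eqref{cconvexity} and \eqref{cstarconvexity}, the bounds \eqref{densitybounds}, the obliqueness, and so on. \textbf{Step 1 (reference solution).} Since $c_0$ satisfies \eqref{bi-twist}, \eqref{mixedHessianinvertible}, \eqref{A3} and $\O,\O^*$ are uniformly $c_0$- and $c_0^*$-convex, the regularity theory of Ma--Trudinger--Wang and Trudinger--Wang yields, for the data $\rho,\rho^*$ satisfying \eqref{densitybounds} and \eqref{massbalance}, a uniformly $c_0$-convex Kantorovich potential $\bar u_0\in C^{4,\a}(\Omegabar)$ with structural bounds on $\Norm{\bar u_0}_{C^{4,\a}}$, a positive lower bound for the associated Monge--Amp\`ere determinant, and quantitative obliqueness of the induced boundary operator. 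Differentiating the stationary $c_0$-problem at $\bar u_0$ gives a linear operator $\mathcal L_0$ which is uniformly elliptic, carries an oblique conormal-type boundary condition, and --- reflecting the variational (concave Kantorovich functional) structure of the problem --- is self-adjoint with respect to a smooth positive density, with kernel exactly the constants (strict $c_0$-convexity of $\bar u_0$ plus connectedness of $\O$), hence a spectral gap $-\lambda_0<0$ beneath the trivial eigenvalue $0$.

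\textbf{Step 2 (initial datum and the linearized picture for $c$).} Because $\bar u_0$ solves the $c_0$-problem exactly and $\Norm{c-c_0}_{C^4}\le\hat R$, the flow \eqref{PDE} for $c$ has $\bar u_0$ as an $O(\hat R)$-approximate stationary point, and its linearization $\mathcal L_c$ at $\bar u_0$ is a small (non-self-adjoint) perturbation, of size $O(\hat R)$, of the self-adjoint operator $\mathcal L_0$; it is therefore sectorial, retains a spectral gap $-\lambda<0$ near $-\lambda_0$, keeps the constants in its kernel exactly (the nonlinear $c$-operator is invariant under adding constants), and generates an exponentially decaying semigroup $e^{t\mathcal L_c}$ off the constant mode. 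Put $u_{\text{initial}}=\bar u_0+\zeta$, where $\zeta\in C^{4,\a}(\O)$ with $\Norm{\zeta}_{C^{4,\a}}=O(\hat R)$ is a boundary corrector chosen so that \eqref{boundarycompatbility} holds exactly for the cost $c$ --- the residual to be cancelled is $O(\hat R)$ on $\partial\O$, since $\bar u_0$ satisfies the $c_0$-version, and obliqueness makes the corrector well-posed; for $\hat R$ small, $u_{\text{initial}}$ is locally uniformly $c$-convex and $\Norm{u_{\text{initial}}-\bar u_0}_{C^{2,\a}}=O(\hat R)$. \textbf{Step 3 (global existence via an invariant neighborhood).} Near $u_{\text{initial}}$ the flow \eqref{PDE} is a uniformly parabolic fully nonlinear equation with an oblique boundary condition, compatible at $t=0$ by construction; parabolic Schauder theory for oblique problems gives a unique short-time solution $u\in C^{2,\a}_x C^{1,\a/2}_t$ that continues as long as $u(\cdot,t)$ stays uniformly $c$-convex with an a priori $C^{2,\a}$ bound. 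Writing $v=u-\bar u_0$, on any interval $[0,T]$ with $\Norm{v(\cdot,t)}_{C^{2,\a}}\le\d_0$ --- $\d_0$ a structural constant forcing uniform $c$-convexity of every $u(\cdot,t)$ --- one has $\partial_t v=\mathcal L_c v+g+\mathcal Q[v]$, with $g$ the $O(\hat R)$ stationary defect of $\bar u_0$ for the $c$-flow and $\mathcal Q[v]$ at least quadratic in $(v,Dv,D^2v)$; feeding the decay of $e^{t\mathcal L_c}$ into a Duhamel estimate and closing up with parabolic Schauder estimates gives, for $\d_0$ small,
\[
\Norm{v(\cdot,t)}_{C^{2,\a}}\ \le\ C\big(\Norm{v(\cdot,0)}_{C^{2,\a}}\,e^{-\lambda t/2}+\hat R\big),\qquad t\in[0,T].
\]
Choosing $\hat R$ so that the right-hand side stays below $\d_0$ for all $t\ge 0$, the a priori bound is never saturated, hence propagates, and the continuation criterion produces a global solution $u\in C^2_xC^1_t(\Omegabar\times[0,\infty))$; crucially, only uniform parabolicity and the spectral gap enter here, never \eqref{A3} for $c$.

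\textbf{Step 4 (convergence and identification).} The uniform $C^{2,\a}$ bound makes the trajectory precompact in $C^2(\Omegabar)$; since \eqref{PDE} admits the Kantorovich functional (up to sign) as a strict Lyapunov function and its stationary set near $\bar u_0$ is, by the nondegeneracy of $\mathcal L_c$ transverse to the constants, a nondegenerate one-parameter family, a LaSalle-type argument combined with the exponential rate forces $u(\cdot,t)\to u^{\infty}$ in $C^2(\Omegabar)$ for a single stationary $u^\infty$. Being stationary for the $c$-flow, $c$-convex (it lies in the uniformly $c$-convex regime), and inducing a transport map $T_{u^\infty}$ that pushes $\rho$ forward to $\rho^*$, the limit $u^\infty$ is a $c$-convex Kantorovich potential for the optimal transport problem between $(\rho,\O)$ and $(\rho^*,\O^*)$ with cost $c$ --- in particular this problem is solvable even though $c$ need not satisfy \eqref{A3}.

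\textbf{Main obstacle.} The crux is Step 3: one must obtain global-in-time second-order control \emph{without} the usual MTW-based global estimate, by confining the flow to the linearly stable regime around $\bar u_0$ and closing the estimates perturbatively through parabolic Schauder theory for the oblique problem --- the radius of that stable neighborhood is precisely what fixes $\hat R$, which explains the ``size depends on the data'' clause in the statement. Subordinate difficulties are the self-adjointness and uniform-in-$c$ spectral gap of the linearization (in particular identifying its kernel with the constants) and the resulting sectoriality and semigroup decay, the construction of a compatible initial datum that is simultaneously locally uniformly $c$-convex and $C^{2,\a}$-close to $\bar u_0$, and the dynamical-systems argument pinning the limit along the constant directions.
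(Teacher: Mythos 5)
Your plan is a genuinely different route from the paper's (linearized stability around the $c_0$-equilibrium, spectral gap, Duhamel in H\"older spaces), but as written its load-bearing steps contain real gaps. First, the entire Step 3 estimate rests on a spectral gap for the linearization $\mathcal L_c$ with the linearized second boundary condition, deduced from an asserted self-adjointness of $\mathcal L_0$ ``reflecting the variational structure.'' This is a nontrivial structural fact (it amounts to a divergence-form identity for $w^{ij}$ weighted by $\rho$ together with conormality of $\beta$), and you do not prove it. More seriously, since the constants lie in the kernel and the flow \eqref{PDE} does not preserve any normalization of $u$ (adding a constant to $u$ gives another solution, and $\frac{d}{dt}\int_\O u$ is not zero), the equilibrium is not hyperbolic, so the claimed bound $\Norm{v(\cdot,t)}_{C^{2,\a}}\le C(\Norm{v(\cdot,0)}_{C^{2,\a}}e^{-\lambda t/2}+\hat R)$ cannot hold in the full space as stated. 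Repairing it by a ``normally stable manifold of equilibria'' argument presupposes a $C^{2,\a}$ one-parameter family of stationary solutions of the $c$-problem near $\bar u_0$ --- precisely the elliptic solvability/regularity for the non-MTW cost that is part of what must be proven, and which (as the paper remarks in Subsection \ref{remarksonmainthm}) does not follow from a naive inverse/implicit function theorem: the linearization of the second boundary value problem is Fredholm with one-dimensional kernel \emph{and} cokernel, so one must separately show the compatibility defect vanishes. Your Step 4 also invokes the Kantorovich functional as a strict Lyapunov function for \eqref{PDE}, which is unsubstantiated (the flow is not an obvious gradient flow of it); and your initial-data construction only corrects the boundary equation $G^c(\cdot,\nabla u)=0$, whereas \eqref{boundarycompatbility} also demands the global mapping property $T^c_{\text{initial}}(\O)=\O^*$, which requires its own argument (in the paper this is the homeomorphism/connectedness part of the proof of Lemma \ref{lem : initial}).

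For contrast, the paper never linearizes around $\bar u_0$ dynamically. It (i) constructs $u_{\text{initial}}=\Psi(c)$ by an implicit function theorem for a Poisson-type problem with oblique boundary condition, proving strict and locally uniform $c$-convexity and $T^{c,\Psi(c)}(\O)=\O^*$ (Lemma \ref{lem : initial}); (ii) quantifies the failure of \eqref{A3} by the constant $\MTWconst$ in \eqref{MTWbound} and proves, via a barrier and a Pogorelov-type auxiliary function, the dichotomy of Proposition \ref{prop : dichotomy} for $\Norm{W}$, whose two thresholds separate as $\MTWconst\to 0$; (iii) traps the flow below the lower threshold for all time by continuity (Corollary \ref{cor : initialsecondderivboundimpliesboundforalltime}), since $\Norm{W^{c,\Psi(c)}}$ at $t=0$ is close to $\Norm{W^{c_0,u_0}}$; and (iv) feeds the resulting uniform $C^2$ bound into the non-MTW estimates and the existing long-time existence and convergence machinery of \cite{Kitagawa12}. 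If you want to salvage your approach, the pieces you must actually supply are the self-adjoint/divergence structure of the linearized oblique problem, a treatment of the neutral constant direction that does not presuppose stationary solutions for $c$, and a genuine Lyapunov or oscillation-decay mechanism for convergence.
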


\begin{Remark} We can actually obtain exponential convergence of the flow from our previous result \cite{AbedinKitagawa20}, where the \eqref{A3} condition was not used.
\end{Remark}

\subsection{A Key Example}

Let $c_0(x,y):= \frac{1}{2}|x-y|^2$. Assume $\O,\O^*$ are fixed uniformly convex domains such that $\dist(\O, \O^*) > 0$, and let $\rho, \rho^*$ be fixed measures supported on $\O,\O^*$ respectively, and satisfying \eqref{densitybounds} and \eqref{massbalance}. For $p \in [-2,2] \setminus \{0\}$, consider the cost $c(x,y) := \frac{1}{p}|x - y|^p$. It is known that $c$ satisfies \eqref{bi-twist} and \eqref{mixedHessianinvertible} for $p \in [-2,2] \setminus \{0,1\}$ (as long as $\dist(\O, \O^*) > 0$) but fails to satisfy \eqref{A3} for any $p \notin [-2,1) \cup \{2\}$; see \cite[Section 8, Example 4]{TrudingerWang09}. As a consequence of Theorem \ref{thm : main}, we have the following result:

\begin{Proposition} Let $c_0, \O, \O^*, \rho, \rho^*$ be as above. Suppose there exist constants $M_1, M_2 > 0$ such that
$$0 < M_1 \leq |x-y| \leq M_2 \text{ for all } (x,y) \in \O \times \O^*.$$
Then there exist $\gamma_0, r_0 > 0$ depending only on $M_1, M_2$ and the structure of the problem such that if $|p \pm 2| \leq \gamma_0$, then $||c - c_0||_{C^4(\overline{\O} \times \overline{\mathcal{N}_{r_0}(\O^*)})} \leq \hat{R}$. Consequently, there is an initial condition for which the solution of the flow \eqref{PDE} corresponding to $c$ exists for all time and converges in $C^2$ to a solution of the optimal transport problem between $(\rho, \O)$ and $(\rho^*, \O^*)$ for the $p$-th power cost with $|p \pm 2| \leq \gamma_0$. \end{Proposition}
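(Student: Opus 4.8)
The plan is to deduce the statement directly from Theorem~\ref{thm : main}: global existence and $C^2$-convergence are handed over by that theorem, so the only work is to exhibit, for each admissible $p$, a reference cost satisfying its hypotheses and to check the $C^4$-smallness of the difference. There are two regimes. When $p$ is near $2$ we take the reference cost to be $c_0(x,y)=\frac12|x-y|^2$ as in the statement; when $p$ is near $-2$ we instead take the reference cost to be $c_0^-(x,y):=-\frac12|x-y|^{-2}$, i.e.\ the $p$-power cost at the value $p=-2$, and the conclusion then reads $\|c-c_0^-\|_{C^4(\overline\O\times\overline{\mathcal N_{r_0}(\O^*)})}\le\hat R_-$ with $\hat R_-$ the threshold of Theorem~\ref{thm : main} for that reference. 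In both regimes the perturbed cost $c=\frac1p|x-y|^p$ need not satisfy \eqref{A3}, which is exactly the point.

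First I would fix the neighborhood scale: set $r_0:=M_1/2$. Since $M_1\le|x-y|$ on $\O\times\O^*$ forces $\dist(\O,\O^*)\ge M_1$, on the enlarged product $\overline\O\times\overline{\mathcal N_{r_0}(\O^*)}$ one has $|x-y|\in[M_1/2,\,M_2+M_1/2]$, a compact subinterval of $(0,\infty)$ depending only on $M_1,M_2$. Consequently $c_0$, $c_0^-$, and every $p$-power cost with $p$ in a fixed neighborhood of $\{2,-2\}$ lie in $C^{4,\a}(\overline\O\times\overline{\mathcal N_{r_0}(\O^*)})$, with $C^4$ bounds uniform in $p$.

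Next I would verify the structural hypotheses of Theorem~\ref{thm : main} for the two reference costs. For $c_0=\frac12|x-y|^2$: the twist condition \eqref{bi-twist} and the nondegeneracy \eqref{mixedHessianinvertible} are immediate (the mixed Hessian is $-\Id$), \eqref{A3} holds because the Ma--Trudinger--Wang tensor of the quadratic cost vanishes identically, and \eqref{cconvexity}--\eqref{cstarconvexity} with respect to $c_0$ reduce to the ordinary uniform convexity of $\O$ and $\O^*$, which is assumed. For $c_0^-=-\frac12|x-y|^{-2}$: since $-2\in[-2,1)$, the conditions \eqref{bi-twist}, \eqref{mixedHessianinvertible} and \eqref{A3} hold on $\overline\O\times\overline{\mathcal N_{r_0}(\O^*)}$ by the analysis of power costs in \cite[Section~8, Example~4]{TrudingerWang09} together with $\dist(\O,\mathcal N_{r_0}(\O^*))>0$. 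The one genuinely nontrivial step is \eqref{cconvexity}--\eqref{cstarconvexity} for $c_0^-$: these involve the convexity of the sets $\{-D_xc_0^-(x,y):y\in\O^*\}$ (and the symmetric ones), which are images of $\O^*$ under an inversion-type map rather than translates of $\O^*$, so Euclidean convexity does not transfer for free, and one must invoke the known criterion guaranteeing uniform $c_0^-$- and $(c_0^-)^*$-convexity for uniformly convex domains at positive mutual distance (again \cite[Section~8, Example~4]{TrudingerWang09}). I expect this to be the main obstacle of the argument.

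Finally, Theorem~\ref{thm : main} supplies thresholds $\hat R_+>0$ and $\hat R_->0$ for the two structures; put $\hat R:=\min\{\hat R_+,\hat R_-\}$. The map $(p,x,y)\mapsto\frac1p|x-y|^p=\frac1p e^{p\log|x-y|}$ and all of its $x$- and $y$-derivatives up to order $4$ are jointly continuous on $\big([2-\tfrac12,2+\tfrac12]\cup[-2-\tfrac12,-2+\tfrac12]\big)\times\{(x,y):|x-y|\in[M_1/2,M_2+M_1/2]\}$, hence uniformly continuous on that compact set; since at $p=2$ the cost equals $c_0$ and at $p=-2$ it equals $c_0^-$, there is $\gamma_0\in(0,\tfrac12)$, depending only on $M_1,M_2$ and the structure, such that $|p-2|\le\gamma_0$ forces $\|c-c_0\|_{C^4(\overline\O\times\overline{\mathcal N_{r_0}(\O^*)})}\le\hat R$ and $|p+2|\le\gamma_0$ forces $\|c-c_0^-\|_{C^4(\overline\O\times\overline{\mathcal N_{r_0}(\O^*)})}\le\hat R$. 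Applying Theorem~\ref{thm : main} in the appropriate regime then produces the initial condition, the global-in-time solution of \eqref{PDE} for cost $c$, and its $C^2$-convergence to a Kantorovich potential for the optimal transport problem between $(\rho,\O)$ and $(\rho^*,\O^*)$ with cost $c$, as claimed.
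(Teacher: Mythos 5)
Your reduction of the $p\approx 2$ regime to Theorem \ref{thm : main} is correct and is precisely the argument the paper intends (the paper offers no separate proof, stating the proposition as a direct consequence of the theorem): with $r_0\leq M_1/2$ the quantity $\norm{x-y}$ stays in a compact subinterval of $(0,\infty)$ on $\overline\O\times\overline{\mathcal{N}_{r_0}(\O^*)}$; the quadratic reference satisfies \eqref{bi-twist}, \eqref{mixedHessianinvertible}, \eqref{A3}, and \eqref{cconvexity}--\eqref{cstarconvexity} (the latter two reducing to the assumed Euclidean uniform convexity since the mixed third derivatives vanish); and joint smoothness of $(p,x,y)\mapsto \tfrac1p\norm{x-y}^p$ on that compact set yields $\gamma_0$ depending only on $M_1,M_2$ and $\hat R$ such that $|p-2|\leq\gamma_0$ forces $\Norm{c-c_0}_{C^4(\overline\O\times\overline{\mathcal{N}_{r_0}(\O^*)})}\leq\hat R$. (Minor point: take $r_0=\min\{M_1/2,\,\cdot\,\}$ so that $\mathcal{N}_{r_0}(\O^*)$ also has $C^1$ boundary, as the paper's standing assumption on $r_0$ requires.)

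The genuine gap is in your $p\approx -2$ regime. Once you switch the reference to $c_0^-=-\tfrac12\norm{x-y}^{-2}$ (a departure from the literal statement, which compares with the quadratic $c_0$, but the only way an argument of this type could run), Theorem \ref{thm : main} demands that $\O$ and $\O^*$ be uniformly $c_0^-$- and $(c_0^-)^*$-convex with respect to each other, and this is exactly the step you do not prove: you appeal to a ``known criterion'' in \cite[Section 8, Example 4]{TrudingerWang09}, but that example only identifies the range of $p$ for which \eqref{A3} holds and contains no statement that Euclidean uniform convexity plus positive mutual distance implies $c$-convexity for these costs. Indeed no such criterion can hold under your hypotheses: $-\nabla_y c_0^-(x,y)=(x-y)/\norm{x-y}^4$, so \eqref{cconvexity} for $c_0^-$ asks that the image of $\O$ under the radial map $z\mapsto z\norm{z}^{-4}$ centered at $y\in\O^*$ be convex, and for a highly eccentric uniformly convex $\O$ (say a thin ellipse around $\{1\}\times[-2,2]$) with $\O^*$ a small ball near the origin --- a configuration compatible with $0<M_1\leq\norm{x-y}\leq M_2$ --- that image is a thin ribbon around a genuinely curved arc (the points $(1,t)(1+t^2)^{-2}$, $t\in[-2,2]$) and is not convex. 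So the domain-convexity hypothesis is an additional assumption, not a consequence of the stated ones, and your proof of the $|p+2|\leq\gamma_0$ case does not go through as written; only the $|p-2|\leq\gamma_0$ half of your argument is complete.
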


This is a parabolic counterpart (when perturbing around $p=2$) of results obtained previously by  Caffarelli-Gonz\'alez-Nguyen  \cite{CaffarelliGonzalezNguyen14} and Chen-Figalli \cite{ChenFigalli15} in the elliptic case under slightly different assumptions on the cost and densities. We refer the reader to Subsection \ref{relatedliterature} for more details on these and related works.

\subsection{Remarks on the main theorem and proof strategy}\label{remarksonmainthm}

As stated previously, the condition \eqref{A3} is not necessarily preserved by $C^4$ perturbations. By a counterexample by Loeper in  \cite{Loeper09}, it is known the condition is sharp in the sense that if a cost function fails to satisfy \eqref{A3}, then there exist smooth source and target measures for which the optimal transport map is discontinuous. However, since the threshold value $\hat{R}$ depends on the structure of the problem (in particular on the particular choices of $\rho$ and $\rho^*$), this does not violate Loeper's counterexample. We also do not require any special structure on the source and target measures beyond \eqref{massbalance}, \eqref{densitybounds}, and sufficient smoothness.

Let us comment on the strategy behind the proof of Theorem \ref{thm : main}. We note that the conclusion of the theorem does not immediately follow from an inverse function theorem type argument. In fact, such an approach to proving Theorem \ref{thm : main} would require a version of the inverse function theorem on Frech\'et manifolds, which is known to be false in general. As such, we must revisit the approach taken in \cite{Kitagawa12} for proving long-time existence and convergence of the flow \eqref{PDE}, and taking care to not make use of the \eqref{A3} condition. A number of a priori estimates from \cite{Kitagawa12} hold true even in the non-MTW setting, but a missing essential ingredient is the global $C^2$ estimate. In \cite{Kitagawa12}, the interior $C^2$ estimate makes crucial use of \eqref{A3}, while the boundary $C^2$ estimate relies on the validity of an appropriate interior estimate. To circumvent the role played by \eqref{A3}, we quantify the failure of \eqref{A3} and establish a dichotomy (see Proposition \ref{prop : dichotomy}) that allows us to exclude blow-up of the $C^2$ norm of the solution if the initial data is chosen appropriately. Such a dichotomy argument relies on the $C^4$ closeness of the cost function to one that satisfies \eqref{A3}, and is in the spirit of \cite{Warren11}. The construction of appropriate initial data requires an implicit function theorem argument which also makes use of the $C^4$ closeness to an MTW cost.

\subsection{Comments on related literature}\label{relatedliterature}

\subsubsection{Perturbation results for elliptic Monge-Amp\`ere equations} Only a limited number of works address the regularity of solutions to elliptic Monge-Amp\`ere equations arising from optimal transport problems when the cost function does not satisfy \eqref{A3}. The first such result was obtained by Caffarelli, Gonz\'alez, and Nguyen \cite{CaffarelliGonzalezNguyen14}, who consider the cost function $c(x,y) = \frac{1}{p}|x-y|^p$ with $p \approx 2$ and domains $\O, \O^*$ that are uniformly convex and a positive distance apart. As mentioned previously, these cost functions do not satisfy \eqref{A3} unless $p = 2$. Global versions of the results in \cite{CaffarelliGonzalezNguyen14} were obtained by Chen and Figalli \cite{ChenFigalli15}. Their more recent paper \cite{ChenFigalli16} investigates the regularity of optimal maps when the cost function is a $C^2$ perturbation of an MTW cost. The proof strategy in all the aforementioned works uses localization arguments originally due to Caffarelli \cite{Caffarelli90b} and shows that the optimal map for the perturbed cost inherits regularity from the optimal map for the quadratic cost (or some other fixed cost satisfying \eqref{A3}). We note that our approach in the present paper, aside from being in the parabolic setting, differs from that in  \cite{CaffarelliGonzalezNguyen14, ChenFigalli15, ChenFigalli16} as we do not perform any localization arguments; however, we do require higher regularity of the cost function as well as the source and target measures. Finally, the work of Warren \cite{Warren11} establishes regularity results for optimal transport problems between log-concave mass distributions supported on small balls with cost functions that are sufficiently close to the quadratic cost. As mentioned in Subsection \ref{remarksonmainthm}, our dichotomy argument for establishing the parabolic $C^2$ estimates is inspired by the corresponding elliptic estimates in \cite{Warren11}, however we do not require log-concavity of the measures which is an essential ingredient in Warren's result.

\subsubsection{Perturbation of distance-squared cost on Riemannian manifolds} A different class of perturbative results exists in the optimal transport literature due to work of Figalli, Rifford, Villani, and Loeper \cite{LoeperVillani10, FigalliRifford09, FigalliRiffordVillani11, FigalliRiffordVillani12}. These papers consider $C^4$ perturbations of canonical metrics on Riemannian manifolds and the associated cost function given by the Riemannian distance squared. The qualitative difference between these results and ours is that the perturbed cost functions considered in \cite{LoeperVillani10, FigalliRifford09, FigalliRiffordVillani11, FigalliRiffordVillani12} end up satisfying a form of \eqref{A3}; in fact, this is one of the main contributions of the aforementioned papers. A separate collection of perturbative results is due to Delano\"e and Ge \cite{Delanoe04, DelanoeGe10, DelanoeGe11, Delanoe15}. These papers consider distance-square costs for Riemannian metrics whose Gauss curvature is almost constant, and prove regularity of the optimal map.

\subsubsection{Gradient flow of the cost functional} The flow \eqref{PDE} provides a natural algorithm for approximating optimal maps, but is one of only a handful of known asymptotic methods that have been proposed for solving the optimal transport problem \cite{AngenentHakerTannebaum03, FaccaCardinPutti18}. Among these, the one most relevant to the present paper originates in the work of Angenent, Haker, and Tannenbaum \cite{AngenentHakerTannebaum03}. While the approach in \cite{AngenentHakerTannebaum03} easily adapts to arbitrary cost functions and densities, it has been most studied in the case 
\begin{equation}\label{AHTconfiguration}
c(x,y) = |x-y|^2, \quad \rho = \mathcal{L}^n \measurerestr \O, \quad \rho^* \text{ a.c. w.r.t } \mathcal{L}^n, \  \text{supp}(\rho^*) = \O^*.
\end{equation}
The key idea in \cite{AngenentHakerTannebaum03} is to consider a one-parameter family of admissible maps $T^t$ such that the function
$$t \mapsto \mathcal{C}(T^t) := \int_{\O} |x-T^t(x)|^2 \ dx$$
is decreasing. This can be achieved if $T^t$ solves the nonlinear and non-local vectorial transport problem
\begin{equation}\label{AHT}\tag{AHT}
\begin{cases}
\dot{T^t} + (\mathbf{v}^t  \cdot \nabla) T^t = 0,\\
\mathbf{v}^t = \mathbb{P}T^t,\\
(T^0)_{\#}\rho = \rho^*,
\end{cases}
\end{equation}
where $\mathbb{P}$ is the so-called Leray projector onto divergence-free vector fields in $\O$ satisfying a no-flux condition on $\partial \O$. The short-time existence of the flow \eqref{AHT} and infinite-time existence of a regularized version is proved in \cite{AngenentHakerTannebaum03}; see \cite[Chapter 6, Section 6.2]{SantambrogioBook15} for  additional results in two dimensions. Note that any gradient is a fixed point of \eqref{AHT}. It is an interesting and challenging problem to determine conditions on $T^0$ guaranteeing the infinite-time existence and convergence of \eqref{AHT} to an admissible map that is also the gradient of a convex function; by a celebrated theorem of Brenier \cite{Brenier91}, such a map is the unique optimal transport map between $\rho$ and $\rho^*$ for the quadratic cost. Some progress towards solving this problem has been made in the recent paper \cite{NguyenNguyen20}, whose authors show that if $T^0$ is sufficiently close in $H^s(\O), \ s > 1+\frac{n}{2},$ to the gradient of a uniformly convex function $\varphi$ (where the closeness depends on $\varphi$), then $T^t$ converges exponentially fast in $H^{s-1}(\O)$  as $t \to \infty$ to the optimal map. An appealing aspect of the result in \cite{NguyenNguyen20} is that it applies to smooth domains $\O,\O^*$ that are not necessarily convex; note that, in this case, the optimal map need not even be continuous, owing to counterexamples constructed by Caffarelli \cite{Caffarelli92} (also see \cite{Jhaveri19}). On the other hand, the conditions imposed on $T^0$ are rather stringent; moreover, each target density $\rho^*$ will require an appropriate choice of $T^0$ to ensure the convergence of the flow. In contrast, the results in the present paper, combined with our previous works \cite{Kitagawa12, AbedinKitagawa20}, show that for the configuration \eqref{AHTconfiguration} with $\O, \O^*$ convex, the parabolic Monge-Amp\`ere-type equation \eqref{PDE} exhibits desirable asymptotic behavior when initialized using a single function $u_{\text{initial}}$ for \emph{any} target density $\rho^*$ supported on $\O^*$. The function $u_{\text{initial}}$ can also be constructed explicitly by solving the optimal transport problem for the configuration \eqref{AHTconfiguration} with $\rho^* := \frac{\text{Vol}(\O)}{\text{Vol}(\O^*)} \mathcal{L}^n \measurerestr \O^*$. This makes \eqref{PDE} arguably more flexible than the system \eqref{AHT} when the source and target domains are convex, considering the convergence criteria presently available for the latter flow.
 
\subsection{Additional Remarks}

\begin{Remark} 
The $C^4$ closeness of $c$ to $c_0$ in $\O \times \mathcal{N}_{r_0}(\O^*)$ will guarantee that $c$ satisfies \eqref{bi-twist}. However, in certain situations, \eqref{bi-twist} can be verified independently (for example,  in the case of the $p$-th power cost). For another class of such examples, let $c_0(x,y):= \frac{1}{2}|x-y|^2$. Consider the cost $c(x,y) = c_0(x,y) + \eta(x,y)$ for some smooth function $\eta$. Let us assume that $\eta$ satisfies the following \emph{anti-monotonicity} condition:
\begin{align}\label{anti-monotone}
y &\mapsto -\nabla_x \eta(x,y) \text{ is a monotone map }\quad \forall x\in \overline\O,\notag\\
x &\mapsto -\nabla_y \eta(x,y) \text{ is a monotone map } \quad \forall y\in \overline{\O^*}.
\end{align}
We claim if $\eta$ satisfies \eqref{anti-monotone}, then $c$ satisfies \eqref{bi-twist}. Indeed,
$$\nabla_x c(x,y) = x-y + \nabla_x \eta(x,y).$$
Consequently, if there exists $x_0 \in \O$ and $y_1, y_2 \in \O^*$ such that $\nabla_x c(x_0, y_1) = \nabla_x c(x_0, y_1)$, it follows that
$$y_2-y_1  = \nabla_x \eta(x_0,y_2) - \nabla_x \eta(x_0,y_1).$$
This implies
$$|y_2-y_1|^2  = \left\langle \nabla_x \eta(x_0,y_2) - \nabla_x \eta(x_0,y_1), y_2 - y_1 \right\rangle \leq 0,$$
where in the final inequality we have used \eqref{anti-monotone}. We thus conclude $y_2 = y_1$. Carrying out a similar argument in the $x$ variables, we conclude that $c$ satisfies \eqref{bi-twist} whenever $\eta$ satisfies \eqref{anti-monotone}. 
\end{Remark}

\begin{Remark}
The $C^4$ closeness of $c$ to $c_0$ also allows us to prove existence of appropriate initial conditions $u_{\text{initial}}$ for the flow. As the arguments in Section \ref{sec : initialdata} will show, it is possible to construct $u_{\text{initial}}$ by solving Poisson's equation with an oblique boundary condition. In addition, the initial condition for the cost $c$ will be close to that of $c_0$, and the initial condition for $c_0$ can be chosen to be the solution of the optimal transport problem for $c_0$ with constant densities; this function is guaranteed to be $C^{2,\a}$ smooth by regularity theory for cost functions satisfying \eqref{A3} \cite{TrudingerWang09}.
\end{Remark}

\subsection{Outline of the paper} The remainder of this paper is structured as follows. Section \ref{sec: preliminaries} sets up the necessary notation to precisely state the parabolic optimal transport problem and recalls a number of useful estimates from \cite{Kitagawa12}. Section \ref{sec : initialdata} is devoted to the construction of suitable initial data for the flow \eqref{PDE} when the cost function is sufficiently close to one satisfying \eqref{A3}. Section \ref{sec : c2estimate} establishes the crucial second derivative estimates necessary for the infinite-time existence and convergence to steady state of the flow \eqref{PDE}, which is proved in Section \ref{sec : mainthm}. Appendix \ref{polyprop} collects a number of useful facts about a certain $n$-th degree polynomial that will be used in the proof of the $C^2$ estimates in Section \ref{sec : c2estimate}.

\section{Preliminaries}\label{sec: preliminaries}
\subsection{Notation and Setup}\label{subsec:notation}
We will assume from here onward that $\O$, $\O^*$ are open, smooth, bounded domains in $\R^n$. The outward-pointing unit normal to $\partial \O$ (resp. $\partial \O^*$) will be denoted by $\nu$ (resp. $\nu^*$). The function $\hstar$ will be a $C^2$, normalized defining function for $\O^*$; i.e. $\hstar = 0$ on $\partial \O$, $\hstar < 0$ on $\O$, and $\nabla \hstar = \nu^*$ on $\partial \O^*$ (for existence of such a function, see \cite[Appendix A]{Kitagawa12}). The measures $\rho(x) dx, \rho^*(y) dy$ will be assumed to be absolutely continuous with respect to $n$-dimensional Lebesgue measure and satisfy
\begin{equation}\label{densitybounds}\tag{Den Bds}
0 < \lambda \leq \rho, \rho^* \leq \lambda^{-1} < \infty \quad \text{for a constant } \lambda, \text{ and }
\end{equation}
\begin{equation}\label{massbalance}\tag{Mass Bal}
\quad \int_{\O} \rho = \int_{\O^*} \rho^*.
\end{equation}
We will also assume $c \in C^{4,\a}(\overline{\O} \times \overline{\O^*})$ for some $\a \in (0,1]$, and satisfies the bi-twist conditions:
\begin{align}
y &\mapsto -\nabla_x c(x,y) \text{ is injective }\forall x\in \overline\O,\notag\\
x &\mapsto -\nabla_y c(x,y) \text{ is injective }\forall y\in \overline{\O^*}.\label{bi-twist}\tag{Bi-Twist}
\end{align}
In addition, we assume that 
\begin{equation}\label{mixedHessianinvertible}\tag{Non-Deg}
\det D^2_{x,y} c(x,y) \neq 0.
\end{equation}
For any $p \in -\nabla_x c(x,\O^*)$ and $x \in \O$, (resp. $q \in -\nabla_y c(\O, y)$ and $y \in \O^*$), we denote by $\exp^c_x(p)$ (resp. $\exp^{c^*}_y(q)$) the unique element of $\O^*$ (resp. $\O$) such that
\begin{equation}\label{A1}
-\nabla_x c(x,\exp^c_x(p))= p, \quad -\nabla_y c(\exp^{c^*}_y(q),y) = q.
\end{equation}

We say \emph{$\O$ is $c$-convex with respect to $\O^*$} if the set $-\nabla_y c(\O, y)$ is a convex set for each $y \in \O^*$. Similarly, \emph{$\O^*$ is $c^*$-convex with respect to $\O$} if the set $-\nabla_x c(x, \O^*)$ is a convex set for each $x \in \O$. Analytically, these conditions are satisfied if we have
\begin{equation}\label{cconvexity}\tag{Dom $c$-Conv}
\left[\nu^j_i(x) - c^{\ell, k} c_{ij,\ell}(x,y) \nu^k(x) \right] \t^i \t^j \geq \delta \norm{\tau}^2 \quad \forall \ x \in \bO, \ y \in \overline{\O^*}, \ \tau \in T_x(\bO)
\end{equation}
and
\begin{equation}\label{cstarconvexity}\tag{Tar $c^*$-Conv}
\left[(\nu^*)^j_i(y) - c^{k, \ell} c_{\ell, ij}(x,y) (\nu^*)^k(x) \right] (\t^*)^i (\t^*)^j \geq \delta^*\norm{\t^*}^2 \quad \forall \ y \in \bO^*, \ x \in \overline{\O}, \ \tau^* \in T_y(\bO^*)
\end{equation}
for some constants $\delta$, $\delta^*\geq 0$ respectively, where we will always sum over repeated indices. If $\delta$ (resp. $\delta^*$) is strictly positive, we say that \emph{$\O$ is uniformly $c$-convex with respect to $\O^*$} (resp. \emph{$\O^*$ is uniformly $c^*$-convex with respect to $\O$}).

For the remainder of the paper, we will fix a distinguished cost function $c_0$, along with domains $\O$ and $\O^*$ which are uniformly $c_0$- and $c^*_0$-convex with respect to each other. Additionally, we assume that $c_0$ satisfies conditions \eqref{bi-twist} and \eqref{mixedHessianinvertible} on $\overline \O\times \overline{\mathcal{N}_{r_0}(\O^*)}$ for some fixed radius $r_0>0$ where $r_0$ is small enough that $\mathcal{N}_{r_0}(\O^*)$ has a $C^1$ boundary; such an $r_0$ always exists due to regularity of the distance function of a smooth domain in a small neighborhood of the boundary.
\begin{Definition}\label{def: structural constants}
 We will say that a constant \emph{depends on the structure of the problem} if it depends only on the following quantities:
 \begin{itemize}
 \item $r_0$, $n$, $\text{diam}(\O)$, $\text{diam}(\O^*)$;
 \item $\Norm{\rho}_{C^2(\overline\O)}$, $\Norm{\rho^*}_{C^2(\overline{\O^*})}$, and the constant $\lambda$  in \eqref{densitybounds};
\item $\Norm{c_0}_{C^4(\overline\O \times \overline{\mathcal{N}_{r_0}(\O^*)})}$, the supremum of $||(D^2_{x, y}c_0)^{-1}||$ over $\overline\O \times \overline{\mathcal{N}_{r_0}(\O^*)}$, and the constants $\delta$ and $\delta^*$ in \eqref{cconvexity} and \eqref{cstarconvexity}.
 \end{itemize}
 \end{Definition}
\begin{Definition}\label{def: c-convex function}
\begin{enumerate}[(i)]
\item A function $\varphi: \Omega\to \R$ is said to be \emph{$c$-convex} if for any  $x_0\in \Omega$, there exists $y_0\in \Omega^*$ and $z_0\in \R$ such that 
\begin{align*}
 \varphi(x_0)&=-c(x_0, y_0)+z_0,\\
 \varphi(x)&\geq -c(x, y_0)+z_0,\quad\forall x\in \Omega.
\end{align*}
\item We say $\varphi$ is \emph{strictly $c$-convex} if it is $c$-convex, and the second inequality above is strict whenever $x\neq x_0$.
\item A function $\varphi \in C^2(\overline\O)$ is said to be \emph{locally, uniformly $c$-convex} if $D^2\varphi(x) - A^c(x, \nabla \varphi(x)) > 0$ as a matrix for every $x \in \overline{\O}$, where $A^c$ is defined below.
\end{enumerate}
\end{Definition}

\begin{Remark}\label{remark: sign convention}
 We note here that the sign convention on the cost function $c$ is negative that of \cite{TrudingerWang09, Kitagawa12, AbedinKitagawa20}. We have made this choice in order to express the associated optimal transport problem as a minimization, while maintaining that our potential functions are $c$-convex (supported from below by functions constructed from $c$). However, due to our choice of sign for the matrix function $A$, all expressions in terms of $A$ will be the same as in the above mentioned references.
\end{Remark}

For a function  $u\in C^2_xC^1_t(\overline\O \times [0, \infty))$  (which, in the sequel, will be the solution to a parabolic optimal transportation problem), we will employ the following notation (if the cost function $c$ and the function $u$ are clear from context, we may suppress them from the notation at times):
\begin{enumerate}[(i)]
\item $A^c(x,p):=-D^2_x c(x, y)\vert_{y=\exp^{c}_x(p)}$
\item $T^{c, u}(x,t) = \exp^{c}_x(\nabla u(x, t))$
\item $B^c(x,p) = |\det \ (D^2_{x,y} c)(x,y)\vert_{y=\exp^{c}_x(p)}|\cdot \frac{\rho(x)}{\rho^*(\exp^{c}_x(p))}$
\item $\G^c(x,p) = \hstar(\exp^{c}_x(p))$
\item $\beta^{c, u}(x,t) = \nabla_p \G^c(x,p) \bigg|_{p = \nabla u(x, t)}$ 
\item $W^{c, u}(x,t) = D^2u(x,t) - A^c(x, \nabla u(x,t))$
\end{enumerate}
The components of the matrix $W^{c, u}(x,t)$ will be denoted by $w^{c, u}_{ij}$, while the components of the inverse matrix will be denoted by $w_{c, u}^{ij}$.

Finally, we recall the weak Ma-Trudinger-Wang condition, first introduced in \cite{MaTrudingerWang05} in a stronger form.
\begin{Definition}\label{def: MTW} The cost function $c(x,y)$ satisfies the \emph{weak Ma-Trudinger-Wang} condition if 
\begin{equation}\label{A3}\tag{weak-MTW}
D^2_{p_i p_j} A^c_{k \ell}(x,p)V^i V^j \eta^k \eta^{\ell} \geq 0 \quad \text{for all } x \in \overline{\O},\ p \in -\nabla_x c(x, \overline{\O^*}), \ V \perp \eta.
\end{equation}
\end{Definition}

\subsection{The Parabolic Optimal Transport Problem}
Using the above notation, we can now precisely state the parabolic optimal transportation problem. Given a cost function $c$, domains $\O,\O^*$ satisfying the necessary convexity conditions \eqref{cconvexity} and \eqref{cstarconvexity}, absolutely continuous measures $\rho, \rho^*$ supported on $\O,\O^*$ respectively and satisfying \eqref{massbalance} and a locally, uniformly $c$-convex (as in Definition \ref{def: c-convex function}) function $u_{\text{initial}}$ satisfying the compatibility conditions \eqref{boundarycompatbility}, we seek to find a function $u\in C^2_xC^1_t(\overline\O \times [0, \infty))$ satisfying the evolution equation
\begin{equation}\label{PDE}\tag{Par OT}
\begin{cases}
 \dot u(x,t) = \log \det (W^{c, u}(x,t)) - \log B^c(x,\nabla u(x,t)), & \quad x \in \O, \ t > 0 \\
\G^c(x,\nabla u(x,t)) = 0, & \quad x \in \partial \O, \ t > 0 \\
u(x,0) = u_{\text{initial}}(x), & \quad x \in \O.
\end{cases}
\end{equation}
Here, a dot indicates differentiation in the time variable. The function $u_{\text{initial}}$ must satisfy the compatibility conditions
\begin{equation}\label{boundarycompatbility}\tag{IC}
\begin{cases}
u_{\text{initial}} \in C^{4,\a}(\overline\O) \text{ for some } \a \in (0,1] \\
\G^c(x,\nabla u_{\text{initial}}(x)) = 0 \text{ on } \partial \Omega \\
T^c_{\text{initial}}(\O) = \O^*, \text{ where } T^c_{\text{initial}}(x) := \exp^{c}_x(\nabla u_{\text{initial}}(x)),
\end{cases}
\end{equation}
It follows from  \cite[Theorem 4.4]{Kitagawa12} that for any given $c, \O, \O^*, \rho, \rho^*$ satisfying the conditions outlined in Subsection \ref{subsec:notation}, there is a time $\tmax > 0$ depending on $c, \O, \O^*, \rho, \rho^*$, and $u_{\text{initial}}$ such that the solution $u$ of \eqref{PDE} exists on $\overline{\O} \times [0,\tmax)$. Since in this paper we will fix $\O,\O^*,\rho,\rho^*$ but vary the cost function $c$ in a neighborhood of the distinguished cost $c_0$, and construct initial data $u_{\text{initial}}$ corresponding to each choice of $c$, we will often highlight the dependence of $\tmax$ on $c$. If, in addition, the cost function $c$ satisfies the  \eqref{A3} condition, then the main result of \cite{Kitagawa12} shows that $\tmax(c) = +\infty$. As mentioned in the introduction, the main goal of this paper is to show that $\tmax(c) = +\infty$ for certain cost functions $c$ that fail to satisfy the  \eqref{A3} condition, given $\rho$ and $\rho^*$.

We end this section by collecting those uniform estimates established in  \cite{Kitagawa12}  which do not require the  \eqref{A3} condition.
\begin{Proposition}\label{prop: nonMTW estimates}
Suppose $c\in C^{4, \alpha}(\overline \O\times \mathcal{N}_{r_0}(\overline{\O^*}))$ satisfies \eqref{bi-twist}, and $\O$ and $\O^*$ are respectively uniformly $c$- and $c^*$-convex with respect to each other. Let $u\in C_x^4C_t^2(\overline \O\times [0, \tmax))$ be a solution of \eqref{PDE} for some $\tmax>0$. Then there are constants $K_1$, $K_2>0$ depending only on the structure of the problem and $\Norm{u_{\text{initial}}}_{C^2(\overline{\O})}$ such that, 
\begin{align*}
\max\left\{\sup_{(x, t)\in\overline \O\times [0, \tmax)}\norm{\nabla u(x, t)}, \sup_{(x, t)\in\overline \O\times [0, \tmax)}\norm{\dot u(x, t)} \right\} \leq K_1,\\
 \inf_{(x, t)\in\partial \O\times [0, \tmax)}\inner{\beta^{c, u}(x, t)}{\nu(x)}\geq K_2.
\end{align*} 
\end{Proposition}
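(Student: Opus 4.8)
Our task is simply to revisit the corresponding arguments in \cite{Kitagawa12} and observe that none of the three bounds relies on the \eqref{A3} condition. I would carry them out in the order: gradient bound, obliqueness bound, and finally the bound on $\dot u$ (the last uses the first two). The gradient and obliqueness bounds are pointwise, essentially ``elliptic'' statements depending only on the bi-twist condition and the convexity of the domains, while the $\dot u$ bound comes from a linear parabolic maximum principle.

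For the gradient bound, the key point to establish is that $T^{c,u}(\overline\O,t)\subseteq\overline{\O^*}$ for every $t\in[0,\tmax)$. I would argue by contradiction: if this failed at some time $t$, the function $x\mapsto\G^c(x,\nabla u(x,t))=\hstar(T^{c,u}(x,t))$, which vanishes on $\partial\O$ by the boundary condition in \eqref{PDE}, would attain a strictly positive interior maximum at some $x_0\in\O$; evaluating the spatial gradient (which vanishes there) and the spatial Hessian (which is negative semidefinite there), and expanding via the definitions of $A^c$, $\beta^{c,u}$ and the chain rule, leads to a contradiction with $W^{c,u}(x_0,t)>0$, exactly as in \cite{Kitagawa12}. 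This ``no escape'' step is the one I expect to be the main obstacle, as it is the single place where the interplay between the boundary condition and the positivity of $W^{c,u}$ is genuinely needed. Granting it, $\nabla u(x,t)=-\nabla_x c(x,T^{c,u}(x,t))$ with $(x,T^{c,u}(x,t))\in\overline\O\times\overline{\O^*}$, so $\sup|\nabla u|\le\Norm{\nabla_x c}_{C^0(\overline\O\times\overline{\O^*})}$, a structural constant.

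For the obliqueness bound, fix $x\in\partial\O$ and $t\in[0,\tmax)$ and set $T:=T^{c,u}(x,t)$. By the boundary condition and the previous step $T\in\partial\O^*$, so $\nabla\hstar(T)=\nu^*(T)$ since $\hstar$ is a normalized defining function. Differentiating \eqref{A1} in $p$ gives $D_p\exp^c_x(p)=-\big[(D^2_{x,y}c)(x,\exp^c_x(p))\big]^{-1}$, whence (up to the index conventions of Subsection~\ref{subsec:notation})
\[
\beta^{c,u}(x,t)=\nabla_p\G^c(x,p)\big|_{p=\nabla u(x,t)}=-\,\big[(D^2_{x,y}c)(x,T)\big]^{-1}\nu^*(T),
\]
so that $\inner{\beta^{c,u}(x,t)}{\nu(x)}$ depends only on $x\in\partial\O$, $T\in\partial\O^*$ and $c$. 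That it is bounded below by a positive structural constant $K_2$ is precisely the obliqueness lemma of \cite{Kitagawa12, TrudingerWang09}: one rewrites it using the boundary expressions appearing in \eqref{cconvexity}--\eqref{cstarconvexity} and extracts the bound from the uniform convexity constants $\delta$, $\delta^*$ together with $\Norm{c}_{C^2}$ and the bound on $\Norm{(D^2_{x,y}c)^{-1}}_{C^0}$. No use of \eqref{A3} enters.

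Finally, for the $\dot u$ bound, set $v:=\dot u\in C^2_xC^1_t$ and differentiate the first line of \eqref{PDE} in $t$. Using $\partial_t w^{c,u}_{ij}=v_{ij}-\partial_{p_k}A^c_{ij}(x,\nabla u)\,v_k$ and $\partial_t\log\det W^{c,u}=w_{c,u}^{ij}\partial_t w^{c,u}_{ij}$, one finds that $v$ solves the linear, not necessarily uniformly parabolic equation
\[
\dot v = w_{c,u}^{ij}\,v_{ij} - \big(w_{c,u}^{ij}\,\partial_{p_k}A^c_{ij}(x,\nabla u) + \partial_{p_k}(\log B^c)(x,\nabla u)\big)\,v_k \qquad \text{in } \O\times(0,\tmax),
\]
with \emph{no} zeroth-order term and with coefficient matrix $(w_{c,u}^{ij})$ positive definite because $W^{c,u}>0$; differentiating the boundary condition in $t$ gives the homogeneous oblique condition $\inner{\beta^{c,u}(x,t)}{\nabla v(x,t)}=0$ on $\partial\O\times(0,\tmax)$. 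Since that boundary condition is oblique by the previous step, the maximum principle applies (on $\overline\O\times[0,T]$ for each $T<\tmax$, then letting $T\uparrow\tmax$; at a would-be boundary extremum with $t>0$ the tangential part of $\nabla v$ vanishes and Hopf's lemma forces a definite sign on the normal derivative, contradicting the boundary condition because $\inner{\beta^{c,u}}{\nu}\ge K_2>0$), and yields
\[
\sup_{\overline\O\times[0,\tmax)}|v| = \max_{\overline\O}|v(\cdot,0)| = \max_{\overline\O}\big|\log\det W^{c,u_{\text{initial}}}(\cdot,0)-\log B^c(\cdot,\nabla u_{\text{initial}})\big|.
\]
The right-hand side is controlled by $\Norm{u_{\text{initial}}}_{C^2(\overline\O)}$ and the structure (the lower bound on $\det W^{c,u_{\text{initial}}}(\cdot,0)$ that keeps the logarithm finite comes from local uniform $c$-convexity of $u_{\text{initial}}$ and compactness of $\overline\O$), which gives the constant $K_1$. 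Since \eqref{A3} was invoked nowhere, all three bounds persist in the non-MTW setting.
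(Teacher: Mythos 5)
Your gradient bound and your time-derivative bound essentially retrace \cite[Theorems 6.1, 8.1]{Kitagawa12}, which is also what the paper does, but your obliqueness step contains a genuine gap. Having written $\beta^{c,u}(x,t)$ in terms of $(D^2_{x,y}c)^{-1}$ and $\nu^*(T)$ with $T=T^{c,u}(x,t)\in\partial\O^*$, you assert that the lower bound $\inner{\beta^{c,u}}{\nu}\geq K_2>0$ ``depends only on $x$, $T$ and $c$'' and drops out of \eqref{cconvexity}--\eqref{cstarconvexity} with ``no use of \eqref{A3}''. That quantity is \emph{not} bounded below over arbitrary pairs $(x,T)\in\partial\O\times\partial\O^*$: for $c(x,y)=\frac{1}{2}|x-y|^2$ one has $\exp^c_x(p)=x+p$ and $\beta^{c,u}=\nu^*(T)$, so $\inner{\beta^{c,u}}{\nu}=\inner{\nu^*(T)}{\nu(x)}$, which is negative for many boundary pairs even when both domains are uniformly convex. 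The positivity, and above all its uniformity, comes from the coupling of $x$ and $T^{c,u}(x,t)$ through the solution, and the uniform obliqueness proof in \cite[Section 9]{Kitagawa12} is correspondingly not pointwise: it runs through the ``dual'' parabolic problem obtained by exchanging the roles of $\O$ and $\O^*$, which requires knowing that $T^{c,u}(\cdot,t)$ is injective and that the induced potential $u^*$ solves the dual flow. In \cite{Kitagawa12} that injectivity is \cite[Lemma 5.7]{Kitagawa12}, whose proof \emph{does} use \eqref{A3} --- so the one place where \eqref{A3} silently enters the chain of citations is exactly the place your argument waves through.

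This is in fact the entire content of the paper's proof of the proposition: it cites \cite[Theorems 6.1, 8.1, 9.2]{Kitagawa12} and then replaces every use of \cite[Lemma 5.7]{Kitagawa12} by Proposition \ref{prop: injectivity}, where injectivity of $T^{c,u}(\cdot,t)$ and the dual equation \eqref{eqn: dual PDE} are derived from the partial regularity theorem of \cite{DePhilippisFigalli15} instead of from \eqref{A3}. Without that substitution (or some alternative \eqref{A3}-free route to uniform obliqueness), your proof of the $K_2$ bound is incomplete, and this also touches your $\dot u$ estimate, whose Hopf-lemma step at the boundary invokes $\inner{\beta^{c,u}}{\nu}\geq K_2>0$. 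The remaining ingredients of your proposal (the ``no escape'' argument giving $T^{c,u}(\overline\O,t)\subset\overline{\O^*}$ and the maximum principle for $\mathcal{L}\dot u=0$) are consistent with the cited results and do not use \eqref{A3}.
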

\begin{proof}
 This follows immediately from \cite[Theorems 6.1, 8.1, 9.2]{Kitagawa12} noting that none of the above estimates rely on the condition \eqref{A3}. For those results which rely on \cite[Lemma 5.7]{Kitagawa12} (whose proof makes use of \eqref{A3}) we may instead use Proposition \ref{prop: injectivity} below.
\end{proof}

In \cite[Section 5]{Kitagawa12}, the condition \eqref{A3} is used to show that the map $T^c$ is injective and the inverse is related to a parabolic PDE corresponding to reversing the roles of the domains $\O$ and $\O^*$; this ``dual'' problem is used to obtain the uniform obliqueness estimates in \cite[Section 9]{Kitagawa12}. Using recent partial regularity results established in \cite{DePhilippisFigalli15}, we can circumvent the use of \eqref{A3}; for completeness, we provide an alternative proof of the relevant results of \cite[Section 5]{Kitagawa12} in the proposition below.
\begin{Proposition}\label{prop: injectivity}
 If $u$ is a solution of \eqref{PDE}, then $T^c(\cdot, t)$ is injective for each $t\in [0, \tmax(c))$. Moreover, the function $u^*: \overline{\O^*}\times [0, \tmax(c))\to \R$ defined by $u^*(y, t):=-c((T^c)^{-1}(y, t), t)-u((T^c)^{-1}(y, t), t)$ satisfies the (dual) boundary value problem
\begin{align}
\begin{cases}
 \dot u^*(y,t) = \log \det (W^{c^*, u^*}(y, t)) - \log B^{c*}(y,\nabla u^*(y,t)), & \quad x \in \O^*, \ t > 0 \\
\G^{c^*}(y,\nabla u^*(y,t)) = 0, & \quad y \in \partial \O^*, \ t > 0 \\
u^*(y,0) = -c((T^c_{\text{initial}})^{-1}(y), y)-u_{\text{initial}}((T^c_{\text{initial}})^{-1}(y)), & \quad y \in \O^*,
\end{cases}\label{eqn: dual PDE}
\end{align}
where
\begin{enumerate}[(i)]
\item $B^{c^*}(y,q) = |\det \ (D^2_{y, x} c)(x,y)_{x=\exp^{c^*}_y(q)}|\cdot \frac{\rho^*(y)}{\rho(\exp^{c^*}_y(q))}$
\item $\G^{c^*}(y,q) = h(\exp^{c^*}_y(q))$
\item $W^{c^*, u^*}(y,t) = D^2u^*(y,t) +D^2_y c(x,y)_{x=\exp^{c^*}_y(\nabla u^*(y,t))}$
\end{enumerate}
and $h$ is a normalized defining function for $\O$.
\end{Proposition}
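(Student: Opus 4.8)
The plan is to establish the two assertions in turn: first that $T^c(\cdot,t)$ is a diffeomorphism of $\overline\O$ onto $\overline{\O^*}$ for every $t\in[0,\tmax(c))$, and then that $u^*$ solves \eqref{eqn: dual PDE} by a direct differentiation of its defining formula. For the injectivity, I would start from the identity
$D_x T^c(x,t)=-\bigl(D^2_{x,y}c(x,T^c(x,t))\bigr)^{-1}W^{c,u}(x,t)$,
obtained by differentiating $-\nabla_x c(x,T^c(x,t))=\nabla u(x,t)$ in $x$ and using $A^c(x,\nabla u(x,t))=-D^2_x c(x,T^c(x,t))$. Recalling that on $[0,\tmax(c))$ a solution of \eqref{PDE} is locally uniformly $c$-convex (this is part of the short-time existence in \cite[Theorem 4.4]{Kitagawa12}), we have $W^{c,u}(x,t)>0$ there, and $D^2_{x,y}c$ is invertible by \eqref{mixedHessianinvertible}; hence $T^c(\cdot,t)\in C^1(\overline\O)$ is everywhere a local diffeomorphism. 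The boundary condition in \eqref{PDE} says $\hstar(T^c(x,t))=0$ on $\bO$, so since $\hstar<0$ on $\O^*$ and a local diffeomorphism is an open map, $T^c(\cdot,t)$ carries $\bO$ into $\bO^*$ and $\O$ into $\O^*$; by compactness it is then proper as a map $\overline\O\to\overline{\O^*}$.

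The key point — and the step replacing the use of \eqref{A3} in \cite{Kitagawa12} — is that $\overline{\O^*}$ is connected and simply connected. Indeed, fixing $x_0\in\O$, the map $y\mapsto-\nabla_x c(x_0,y)$ is, by \eqref{bi-twist} and \eqref{mixedHessianinvertible}, a diffeomorphism of $\overline{\O^*}$ onto $-\nabla_x c(x_0,\overline{\O^*})$, which is the closure of $-\nabla_x c(x_0,\O^*)$, hence a compact convex set by the $c^*$-convexity of $\O^*$ with respect to $\O$. A proper local diffeomorphism from the connected set $\overline\O$ onto the connected, simply connected set $\overline{\O^*}$ is a one-sheeted covering, i.e.\ a global diffeomorphism; in particular $T^c(\cdot,t)$ is injective, maps $\O$ onto $\O^*$, and restricts to a diffeomorphism $\bO\to\bO^*$. (Alternatively, and as indicated in the discussion preceding the statement, one may instead invoke the partial regularity theorem of \cite{DePhilippisFigalli15} together with the Jacobian identity $|\det D_x T^c(x,t)|\,\rho^*(T^c(x,t))=e^{\dot u(x,t)}\rho(x)$, which likewise avoids \eqref{A3}.) This local-to-global passage is the only genuinely nontrivial point in the proof.

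For the second assertion, set $X(y,t):=\bigl(T^c(\cdot,t)\bigr)^{-1}(y)$, which by the injectivity just proved and the inverse function theorem is as smooth as $T^c$, so $u^*(y,t)=-c(X(y,t),y)-u(X(y,t),t)$ has the required regularity. Differentiating in $y$ and using the defining relation $\nabla_x c(X,y)+\nabla_x u(X,t)=0$, the $D_yX$-terms cancel and one gets $\nabla_y u^*(y,t)=-\nabla_y c(X(y,t),y)$, i.e.\ $\exp^{c^*}_y(\nabla u^*(y,t))=X(y,t)$; hence $\G^{c^*}(y,\nabla u^*(y,t))=h(X(y,t))=0$ for $y\in\bO^*$, since $X$ sends $\bO^*$ to $\bO$. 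The same cancellation in the $t$-derivative gives $\dot u^*(y,t)=-\dot u(X(y,t),t)$. Differentiating $\nabla_y u^*=-\nabla_y c(X,y)$ once more in $y$ and inserting $D_yX=\bigl(D_xT^c(X,t)\bigr)^{-1}=-\bigl(W^{c,u}(X,t)\bigr)^{-1}D^2_{x,y}c(X,y)$ yields $W^{c^*,u^*}(y,t)=(D^2_{x,y}c(X,y))^T\bigl(W^{c,u}(X,t)\bigr)^{-1}D^2_{x,y}c(X,y)$, which is positive definite with $\det W^{c^*,u^*}(y,t)=\bigl(\det D^2_{x,y}c(X,y)\bigr)^2/\det W^{c,u}(X,t)$. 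Substituting the definitions of $B^{c^*}$ and $B^c$ and then using the equation $\dot u=\log\det W^{c,u}-\log B^c$ for $u$ collapses everything to $\log\det W^{c^*,u^*}(y,t)-\log B^{c^*}(y,\nabla u^*(y,t))=-\dot u(X(y,t),t)=\dot u^*(y,t)$, which is the interior equation in \eqref{eqn: dual PDE}; the stated initial condition is simply the value of $u^*$ at $t=0$. I expect the main obstacle to be the global diffeomorphism step of the preceding paragraph; the remainder is bookkeeping of the chain-rule cancellations coming from the defining relation of $T^c$, together with the routine verification of the regularity claims from the smoothness of $c$ and $u$.
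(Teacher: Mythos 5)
Your proposal is correct in substance, but the injectivity half takes a genuinely different route from the paper. The paper does not argue topologically: it invokes the partial regularity theorem of \cite{DePhilippisFigalli15} to get closed measure-zero sets $\Sigma\subset\O$, $\Sigma^*\subset\O^*$ off which $T^c(\cdot,t)$ is a homeomorphism, and then rules out a double point $T(x_1,t)=T(x_2,t)$ by using $\det D_xT\neq 0$ to produce two distinct local inverses near the common image and evaluating them at a nearby point of $\O^*\setminus\Sigma^*$, contradicting the a.e.\ homeomorphism property. Your covering-space argument (proper local diffeomorphism onto a connected, simply connected target is a one-sheeted covering, with simple connectivity of $\O^*$ extracted from \eqref{cstarconvexity} via \eqref{bi-twist} and \eqref{mixedHessianinvertible}) is a legitimate alternative under the paper's standing hypotheses, and is more elementary in that it avoids citing partial regularity; conversely, the paper's route does not use any domain convexity, only the two-sided density bounds for $T(\cdot,t)_\#\rho$ coming from the bound on $\dot u$. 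One step of yours is asserted too quickly: that $T^c(\cdot,t)$ carries $\O$ into $\O^*$ does \emph{not} follow merely from $\hstar\circ T^c=0$ on $\bO$ plus openness, since a priori $T^c$ takes values in $\overline{\mathcal{N}_{r_0}(\O^*)}$ and an interior point could map outside $\overline{\O^*}$. You need the a priori fact $T^c(\overline\O\times[0,\tmax))\subset\overline{\O^*}$, which is either the maximum-principle estimate of \cite{Kitagawa12} (it does not use \eqref{A3}, and is implicitly part of the setup here), or a short separation argument: the topological boundary of the compact set $T^c(\overline\O,t)$ lies in $T^c(\bO,t)\subset\bO^*$ because $T^c$ is open on $\O$, and since $\partial\O^*$ is connected ($\O^*$ being homeomorphic to a convex body) the complement of $\overline{\O^*}$ is connected, forcing $T^c(\overline\O,t)\subset\overline{\O^*}$; openness then gives $T^c(\O,t)\subset\O^*$. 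With that repaired, and working on the interiors $\O\to\O^*$ to sidestep the boundary in the covering argument, your proof goes through; the dual-PDE computation is exactly the differentiation the paper leaves implicit, and your identities $\nabla u^*=-\nabla_y c(X,\cdot)$, $\dot u^*=-\dot u(X,\cdot)$, and $W^{c^*,u^*}=(D^2_{x,y}c)^TW^{-1}D^2_{x,y}c$ are all correct.
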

\begin{proof}
Since $c$ is fixed, we supress the cost in the notation $T^c$ for this proof. Fix $t\in [0, \tmax(c))$. By \cite[Theorem 1.3]{DePhilippisFigalli15}, there exist closed sets $\Sigma\subset \O$ and $\Sigma^*\subset\O^*$, both of zero Lebesgue measure, such that $T(\cdot, t): \O\setminus\Sigma\to\O^*\setminus\Sigma^*$ is a homeomorphism. Now suppose that $y:=T(x_1, t)=T(x_2, t)$ for some $x_1\neq x_2\in \O$, then we must have $x_1$, $x_2\in \Sigma$ and $y\in \Sigma^*$. Since $\det D_xT(\cdot, t)\neq 0$ on $\O$, by the inverse function theorem we can see there are open sets $U_1$, $U_2\subset \O$ and $V\subset \O^*$ with $x_1\in U_1$, $x_2\in U_2$, and $y\in V$ where there exist local inverses $S_i: V\to U_i$ of $T(\cdot, t)$, $i = 1,2$; without loss we may assume $U_1\cap U_2=\emptyset$. Since $\Sigma^*$ is measure zero, there exists a point $y'\in V\setminus \Sigma^*$, and $S_i(y')\in U_i\setminus\Sigma$. However, this would imply that $T(S_1(y'), t)=y'=T(S_2(y'), t)$ which is a contradiction as $S_1(y')\neq S_2(y')\in \O\setminus \Sigma$. Thus $T(\cdot, t)$ is injective on $\O$.

By the inverse function theorem, the map $T^{-1}$ is differentiable on $\overline{\O^*}\times [0, \tmax(c))$, thus by differentiating the relation $u^*(y, t)=-c((T^c)^{-1}(y, t), t)-u((T^c)^{-1}(y, t), t)$ we can easily see $u^*$ satisfies \eqref{eqn: dual PDE}.
\end{proof}

\section{Construction of Initial Data}\label{sec : initialdata}

In this section we will work toward showing the existence of suitable initial conditions for \eqref{PDE} corresponding to a cost function $c$, when $c$ is sufficiently close to $c_0$. First we show that if $c$ is sufficiently close to $c_0$ in $C^4(\overline \O\times \overline{\mathcal{N}_{r_0}(\O^*)})$, then  $c$ and the domains involved inherit various structural properties from $c_0$.
\begin{Lemma}\label{lem: bitwist preserved}
There is a constant $R_0>0$ depending only on the structure of the problem such that:
\begin{enumerate}
 \item If $\Norm{c-c_0}_{C^4(\overline \O\times \overline{\mathcal{N}_{r_0}(\O^*)})}<R_0$, then $\O$ and $\O^*$ are uniformly $c$- and $c^*$-convex with respect to each other, with constants $\delta/2$ and $\delta^*/2$ in \eqref{cconvexity} and \eqref{cstarconvexity}.
 \item If $\Norm{c-c_0}_{C^4(\overline \O\times \overline{\mathcal{N}_{r_0}(\O^*)})}<R_0$, $c$ satisfies conditions \eqref{bi-twist} and \eqref{mixedHessianinvertible} on $\overline \O\times \overline{\mathcal{N}_{r_0}(\O^*)}$.
 \item If $u_0\in C^1(\overline \O)$ is a $c_0$-convex function such that $T^{c_0, u_0}$ is well-defined and a homeomorphism on $\overline \O$, with $\det DT^{c_0, u_0}(x)\neq 0$ for all $x\in \overline \O$, and $\Norm{u-u_0}_{C^{1}(\overline \O)}<R_0$, then $T^{c, u}$ is well-defined, and a homeomorphism on $\overline \O$; here $R_0$ may also depend on $u_0$.
\end{enumerate}
\end{Lemma}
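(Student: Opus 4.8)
The plan is to establish the three assertions in increasing order of difficulty. \emph{Parts (1) and the \eqref{mixedHessianinvertible} half of (2) are ``open condition'' statements.} Since $c_0$ satisfies \eqref{mixedHessianinvertible} on the compact set $\overline\O\times\overline{\mathcal{N}_{r_0}(\O^*)}$, there are structural constants $c_*,L>0$ with $\norm{\det D^2_{x,y}c_0}\geq c_*$ and $\Norm{(D^2_{x,y}c_0)^{-1}}\leq L$ there; if $\Norm{c-c_0}_{C^4}<R_0\leq(2L)^{-1}$, the elementary perturbation bound for invertible matrices gives that $D^2_{x,y}c$ is invertible on the same set, with $\Norm{(D^2_{x,y}c)^{-1}-(D^2_{x,y}c_0)^{-1}}\leq C\Norm{c-c_0}_{C^2}$, so \eqref{mixedHessianinvertible} holds for $c$ and the coefficients $c^{\ell,k}$ are uniformly close to $c_0^{\ell,k}$. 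Since $c_{ij,\ell}$ is uniformly $\Norm{c-c_0}_{C^3}$-close to $(c_0)_{ij,\ell}$ while $\nu,\nu^*$ and their tangential derivatives are fixed, the left-hand sides of \eqref{cconvexity} and \eqref{cstarconvexity} for $c$ differ from those for $c_0$ by at most $C\Norm{c-c_0}_{C^3}\norm{\tau}^2$ with $C$ structural; since the $c_0$-expressions dominate $\delta\norm{\tau}^2$ and $\delta^*\norm{\tau^*}^2$, taking $R_0$ with $CR_0\leq\min(\delta,\delta^*)/2$ gives (1).

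For the \eqref{bi-twist} half of (2) I would isolate and use twice the following elementary sub-lemma: if $K\subset\Rn$ is the closure of a bounded $C^1$ domain and $F_0\in C^1(K;\Rn)$ is injective with $\det DF_0\neq0$ on $K$, then there is $\e>0$, depending only on $\Norm{F_0}_{C^1}$, $\inf_K\norm{\det DF_0}$, a modulus of continuity for $DF_0$, and the global bi-Lipschitz constant of $F_0$, such that every $F$ with $\Norm{F-F_0}_{C^1(K)}<\e$ is injective on $K$. Its proof is the familiar two-scale argument: a quantitative inverse function theorem produces $\delta_0,\mu_0>0$ with $\norm{F_0(p)-F_0(q)}\geq\mu_0\norm{p-q}$ whenever $\norm{p-q}\leq\delta_0$, which survives with constant $\mu_0/2$ for $F$ once $\Norm{F-F_0}_{C^1}<\mu_0/2$, while compactness and injectivity of $F_0$ give $m_0:=\inf\{\norm{F_0(p)-F_0(q)}:\norm{p-q}\geq\delta_0\}>0$, which forces $\norm{F(p)-F(q)}\geq m_0/2>0$ on the complementary scale once $\Norm{F-F_0}_{C^0}<m_0/2$. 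Applying this with $F_0=-\nabla_xc_0(x,\cdot)$ on $K=\overline{\mathcal{N}_{r_0}(\O^*)}$ for each fixed $x$ (injectivity being \eqref{bi-twist} for $c_0$, $\det DF_0=\pm\det D^2_{x,y}c_0\neq0$ being \eqref{mixedHessianinvertible} for $c_0$, with all constants uniform in $x$ by the same compactness reasoning, hence determined by $c_0$ and the domains), together with $\Norm{-\nabla_xc(x,\cdot)+\nabla_xc_0(x,\cdot)}_{C^1(K)}\leq\Norm{c-c_0}_{C^2}$, yields the first line of \eqref{bi-twist} for $c$; the second line follows by the same argument with the roles of the variables exchanged. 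Shrinking $R_0$ below the resulting $\e$ completes (2).

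For part (3), I would first show $T^{c,u}$ is well-defined on $\overline\O$. The compact set $T^{c_0,u_0}(\overline\O)$ lies in the open set on which $\exp^{c_0}_x$ is defined, hence is at distance $d_0>0$ (depending on $u_0$) from its complement; put $V:=\{y:\dist(y,T^{c_0,u_0}(\overline\O))\leq d_0/2\}$. For $R_0$ small, $-\nabla_xc(x,\cdot)$ is, by part (2), a $C^1$-small perturbation of the diffeomorphism $-\nabla_xc_0(x,\cdot)$ on $V$, uniformly in $x$, and $\norm{\nabla u(x)-\nabla u_0(x)}\leq\Norm{u-u_0}_{C^1}<R_0$; the local step of the sub-lemma, run as a quantitative degree argument on the ball $B(T^{c_0,u_0}(x),d_0/2)$, then shows that $\nabla u(x)$ is attained by $-\nabla_xc(x,\cdot)$ at a point of $V$, which by \eqref{bi-twist} for $c$ is the unique such point and therefore equals $T^{c,u}(x)=\exp^c_x(\nabla u(x))$; joint continuity of $T^{c,u}$ in $x$ follows from the implicit function theorem. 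Shrinking $R_0$ further (still depending only on the structure and on $u_0$) makes $\Norm{T^{c,u}-T^{c_0,u_0}}_{C^0(\overline\O)}$ as small as desired.

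It remains to see that $T^{c,u}$ is a homeomorphism, and this is the step I expect to be the main obstacle. Since $\det DT^{c_0,u_0}\neq0$ on the connected $\overline\O$, the map $T^{c_0,u_0}$ is a local homeomorphism with Jacobian of constant sign, so $\deg(T^{c_0,u_0},\O,y)=\pm1$ for $y\in T^{c_0,u_0}(\O)$ and $=0$ for $y\notin T^{c_0,u_0}(\overline\O)$; the $C^0$-closeness and the inclusion $T^{c,u}(\overline\O)\subset V$ then propagate this to $T^{c,u}$ for every $y$ lying a fixed distance inside $T^{c_0,u_0}(\O)$, giving surjectivity of $T^{c,u}$ onto all but a boundary layer of $T^{c_0,u_0}(\overline\O)$. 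The genuinely delicate point is to upgrade ``$\deg(T^{c,u},\O,y)=\pm1$'' to ``$T^{c,u}$ has a single preimage over $y$'', i.e.\ to exclude folding; this requires $\det DT^{c,u}\neq0$, hence control of $T^{c,u}$ at the $C^1$ rather than merely $C^0$ level via the corresponding closeness of $u$ to $u_0$, and an essential use of the non-degeneracy hypothesis $\det DT^{c_0,u_0}\neq0$. Once this injectivity is secured, properness of $T^{c,u}$ together with injectivity and the surjectivity above yields that $T^{c,u}$ is a homeomorphism onto its image, completing (3).
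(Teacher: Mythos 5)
Your treatment of parts (1) and (2) is correct. For (1) and \eqref{mixedHessianinvertible} the paper simply declares these ``clear,'' and your perturbation-of-inverse-matrices plus openness-of-the-inequalities argument is exactly the right justification. For \eqref{bi-twist} you give a self-contained quantitative two-scale argument (local co-Lipschitz bound from a quantitative inverse function theorem, plus a compactness-derived separation constant $m_0$ at the far scale), whereas the paper argues by contradiction along a sequence $c_k\to c_0$ and invokes a uniform local-invertibility statement from the literature; the two routes are equivalent in substance, and yours has the mild advantage of being explicit about which quantities the threshold depends on (note that your $m_0$, like the paper's compactness argument, depends on $c_0$ itself and not only on the constants listed in Definition \ref{def: structural constants} --- this is shared with the paper, so it is not a defect of your argument). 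The chord-arc/quasiconvexity issue for nearby points of the non-convex set $\overline{\mathcal{N}_{r_0}(\O^*)}$ deserves a sentence, but it is the same point the paper handles by citing results valid for Lipschitz domains.

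Part (3), however, is left with a genuine gap: you reduce the homeomorphism claim to injectivity of $T^{c,u}$ (``excluding folding''), observe that your degree argument only yields $\deg=\pm1$ rather than a single preimage, state that closing this requires $C^1$ control of $T^{c,u}$ --- i.e.\ second-derivative control of $u$, which the hypothesis $\Norm{u-u_0}_{C^1(\overline\O)}<R_0$ does not provide --- and then proceed ``once this injectivity is secured.'' That is precisely the step that constitutes part (3); without it you have only shown that $T^{c,u}$ is well-defined, continuous, $C^0$-close to $T^{c_0,u_0}$, and of degree $\pm1$, none of which rules out folding (small $C^1$ perturbations of $u_0$ can oscillate at fine scales and destroy injectivity of $x\mapsto\exp^c_x(\nabla u(x))$ while keeping $\Norm{u-u_0}_{C^1}$ arbitrarily small, e.g.\ already for the quadratic cost in one dimension). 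So your instinct about where the difficulty lies is sound, but the submission does not resolve it. For comparison, the paper disposes of (3) in one sentence, asserting it ``follows in a similar manner'' to (2) by the same compactness-and-uniform-local-invertibility scheme, with $R_0$ allowed to depend on $u_0$; carried out honestly, that scheme also needs uniform invertibility of the perturbed maps near the collision point, i.e.\ information at the level of $DT^{c,u}$, which is available in the paper's actual application (Lemma \ref{lem : initial}, where $\Psi(c)$ is close to $u_0$ in $C^{2,\alpha}$, so $DT^{c,\Psi(c)}$ is $C^0$-close to $DT^{c_0,u_0}$ and local invertibility plus your degree/properness argument closes the proof) but not from $C^1$-closeness alone. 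To complete your write-up you should either prove the statement under $C^2$-closeness of $u$ to $u_0$ (which suffices for every use of the lemma in the paper) or supply an additional structural input (such as quantified local uniform $c$-convexity of $u$) that restores injectivity; as it stands, the key assertion of (3) is assumed rather than proven.
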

\begin{proof}
Claim (1) above and the fact that $c$ will satisfy \eqref{mixedHessianinvertible} are clear if $r_0$ is sufficiently small, depending only on the structure of the problem.

 Suppose by contradiction that some cost fails \eqref{bi-twist} for $R_0$ arbitrarily small. Then there exists a sequence of cost functions $c_k$ converging to $c_0$ in $C^4(\overline \O\times \overline{\mathcal{N}_{r_0}(\O^*)})$, and sequences of points $x_k\in \overline \O$, $y_{1, k}\neq y_{2, k}\in \overline{\mathcal{N}_{r_0}(\O^*)}$ such that $-\nabla_x c_k(x_k, y_{1, k})=-\nabla_x c_k(x_k, y_{2, k})$. By compactness, we may pass to subsequences and assume that $x_k$, $y_{1, k}$, $y_{2, k}$ converge respectively to points $x_\infty$, $y_{1, \infty}$, and $y_{2, \infty}$. This implies $-\nabla_x c_0(x_\infty, y_{1, \infty})=-\nabla_x c_0(x_\infty, y_{2, \infty})$, and since $c_0$ satisfies \eqref{bi-twist}, we must have $y_{1, \infty}=y_{2, \infty}=:y_\infty$. Since for $k$ large we may assume $c_k$ satisfies \eqref{mixedHessianinvertible}, we see $D^2_{x, y}c_k(x_k, y)$ is invertible for any $y\in \overline{\mathcal{N}_{r_0}(\O^*)}$, moreover we can see that the operator norms of these inverses and the Lipschitz constants of the mappings $y\mapsto D^2_{x, y}c_k(x_k, y)$ are bounded by constants depending only the structure of the problem. Since $\O^*$ is $c^*$-convex by the first claim in the lemma, it has Lipschitz boundary; we may thus combine \cite[Theorem 1]{Dederick13} with \cite[Chapter XIV, Lemma 1.3]{Lang93} to see there is neighborhood around $y_\infty$ on which all of the mappings $y\mapsto -\nabla_xc_k(x_k, y)$ are invertible. This is a contradiction for $k$ large enough that both $y_{1, k}$ and $y_{2, k}$ belong to this neighborhood, hence we obtain claim (2). Claim (3) follows in a similar manner, allowing for the extra dependency on $u_0$.
\end{proof}

\begin{Lemma}\label{lem: neighborhood twist}
Suppose $c\in C^4(\overline\O\times \overline{\mathcal{N}_{r_0}(\O^*)})$ with $\Norm{c-c_0}_{C^4(\overline\O\times \overline{\mathcal{N}_{r_0}(\O^*)})}<\min\{R_0, \frac{r_0}{6\sup \Norm{(D^2_{x, y}c_0)^{-1}}}\}$ where $R_0$ is the constant from Lemma \ref{lem: bitwist preserved} (2). Then 
$$\mathcal{N}_{\frac{r_0}{2\sup \Norm{(D^2_{x, y}c_0)^{-1}}}}(-\nabla_xc_0(x, \overline{\O^*}))\subset -\nabla_xc(x, \overline{\mathcal{N}_{r_0}(\O^*)}) \quad \forall \ x\in \overline \O.$$
\end{Lemma}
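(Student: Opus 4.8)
The plan is to fix $x\in\overline\O$ and show that the perturbed map $G(y):=-\nabla_x c(x,y)$ still covers a definite neighborhood of $F(\overline{\O^*})$, where $F(y):=-\nabla_x c_0(x,y)$: the unperturbed map $F$ supplies all the quantitative estimates, and the passage to $G$ is a degree-theoretic stability argument. Write $M:=\sup\Norm{(D^2_{x,y}c_0)^{-1}}$ and $\eta:=\Norm{c-c_0}_{C^4(\overline\O\times\overline{\mathcal{N}_{r_0}(\O^*)})}$, so by hypothesis $\eta<\min\{R_0,\tfrac{r_0}{6M}\}$. On $\overline\O\times\overline{\mathcal{N}_{r_0}(\O^*)}$ the map $F$ is injective (by \eqref{bi-twist} for $c_0$), of class $C^1$, with $DF(y)=-D^2_{x,y}c_0(x,y)$ invertible and $\Norm{DF(y)^{-1}}\le M$.

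The main step is a quantitative non-contraction estimate for $F$. Fix $y_0\in\overline{\O^*}$; since $r_0>0$ one has $\overline{B(y_0,r_0)}\subset\overline{\mathcal{N}_{r_0}(\O^*)}$ and $B(y_0,r_0)\subset\mathcal{N}_{r_0}(\O^*)$. By invariance of domain and the inverse function theorem, $F$ restricts to a $C^1$-diffeomorphism of $\mathcal{N}_{r_0}(\O^*)$ onto the open set $V:=F(\mathcal{N}_{r_0}(\O^*))$, with $\Norm{D(F^{-1})}\le M$ on $V$. I claim
\[
B\!\left(F(y_0),\tfrac{r_0}{M}\right)\subseteq F\!\left(B(y_0,r_0)\right)\qquad\text{and}\qquad |F(y)-F(y_0)|\ge\tfrac{r_0}{M}\ \text{ for }|y-y_0|=r_0.
\]
To see this, for $q$ with $|q-F(y_0)|<\tfrac{r_0}{M}$ set $\gamma(s):=F^{-1}\!\left(F(y_0)+s(q-F(y_0))\right)$; wherever defined, $\gamma$ is $C^1$ with $|\gamma'(s)|\le M|q-F(y_0)|<r_0$, so $|\gamma(s)-y_0|<sr_0\le r_0$ and $\gamma$ stays in $B(y_0,r_0)\subset\mathcal{N}_{r_0}(\O^*)$. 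If the maximal parameter of definition were some $s^*<1$, then $\gamma(s^*)\in\mathcal{N}_{r_0}(\O^*)$ forces $F(y_0)+s^*(q-F(y_0))=F(\gamma(s^*))\in V$; since $V$ is open the lift extends past $s^*$, a contradiction. Hence $\gamma$ reaches $s=1$, yielding a preimage $F^{-1}(q)\in B(y_0,r_0)$ with $|F^{-1}(q)-y_0|\le M|q-F(y_0)|$; the sphere estimate then follows because a point $y$ with $|y-y_0|=r_0$ and $|F(y)-F(y_0)|<\tfrac{r_0}{M}$ would, by injectivity of $F$, have to equal $F^{-1}(F(y))\in B(y_0,r_0)$. \emph{This step is the crux:} the pointwise bound $\Norm{DF(y)^{-1}}\le M$ by itself does not make $F$ non-contracting, and one genuinely needs the global injectivity of $F$ here.

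Finally I would transfer the covering to $c$. Let $p\in\mathcal{N}_{r_0/(2M)}\!\left(-\nabla_x c_0(x,\overline{\O^*})\right)$ and pick $y_0\in\overline{\O^*}$ with $|p-F(y_0)|<\tfrac{r_0}{2M}$; set $B:=B(y_0,r_0)$ and consider the affine homotopy $G_\t:=F+\t(G-F)$, $\t\in[0,1]$, noting $|G_\t(y)-F(y)|=\t|G(y)-F(y)|\le\Norm{c-c_0}_{C^1}\le\eta$. For $y\in\partial B$, the sphere estimate gives
\[
|G_\t(y)-p|\ \ge\ |F(y)-F(y_0)|-|p-F(y_0)|-\eta\ >\ \tfrac{r_0}{M}-\tfrac{r_0}{2M}-\tfrac{r_0}{6M}\ =\ \tfrac{r_0}{3M}\ >\ 0,
\]
so $p\notin G_\t(\partial B)$ for every $\t$, and by homotopy invariance of the Brouwer degree $\deg(G,B,p)=\deg(F,B,p)$. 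Since $|p-F(y_0)|<\tfrac{r_0}{2M}<\tfrac{r_0}{M}$, the whole segment from $F(y_0)$ to $p$ avoids $F(\partial B)$ by the sphere estimate, whence $\deg(F,B,p)=\deg(F,B,F(y_0))=\operatorname{sgn}\det DF(y_0)=\pm1\ne0$ (the last equality since $y_0$ is the unique preimage of $F(y_0)$ in $\overline B$ and is a regular point of $F$). Therefore $-\nabla_x c(x,y)=p$ for some $y\in B\subset\mathcal{N}_{r_0}(\O^*)$, which is the asserted inclusion once $x$ and $p$ are allowed to vary. The only real obstacle is the non-contraction estimate; everything else is bookkeeping, the numerology has slack (one really only needs $\eta<\tfrac{r_0}{2M}$), and $R_0$ enters only to keep the hypotheses of Lemma~\ref{lem: bitwist preserved} available — the degree computation itself does not use injectivity of $G$.
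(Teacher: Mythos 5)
Your argument is correct, but it follows a genuinely different route from the paper's proof. The paper fixes $x$, writes $S_0(y)=-\nabla_x c_0(x,y)$ and $S(y)=-\nabla_x c(x,y)$, invokes Lemma \ref{lem: bitwist preserved} (2) to know that \emph{both} $S_0$ and $S$ are homeomorphisms of $\overline{\mathcal{N}_{r_0}(\O^*)}$, and argues by contradiction: if some $q$ in the stated neighborhood were missed by $S$, the segment from $S(y)$ to $q$ would have to cross $\partial S(\mathcal{N}_{r_0}(\O^*))=S(\partial\mathcal{N}_{r_0}(\O^*))$ at a point $S(y_s)$ with $y_s\in\partial\mathcal{N}_{r_0}(\O^*)$, and a chain of triangle inequalities using the Lipschitz bound $[S_0^{-1}]_{C^{0,1}}\le M:=\sup\Norm{(D^2_{x,y}c_0)^{-1}}$ together with $\norm{y-y_s}\ge r_0$ contradicts $\Norm{S-S_0}_{C^0}\le\Norm{c-c_0}_{C^4}<\tfrac{r_0}{6M}$. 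You instead prove a quantitative covering (non-contraction) estimate for the unperturbed map $F=S_0$ alone, by lifting radial segments through $F^{-1}$, and then transfer the covering to $G=S$ by homotopy invariance of the Brouwer degree on the ball $B(y_0,r_0)$. What your route buys: injectivity of the perturbed map never enters (only continuity of $G$ and $C^1$-closeness are used, as you observe, so $R_0$ plays no role in this lemma beyond keeping the surrounding framework intact), and your path-lifting step is a self-contained substitute for the paper's Lipschitz estimate on $S_0^{-1}$ over the (possibly non-convex) image $S_0(\mathcal{N}_{r_0}(\O^*))$; the cost is the use of degree theory, whereas the paper's argument is shorter but leans on Lemma \ref{lem: bitwist preserved} (2) applied to $c$ and on the boundary identification above. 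One small point worth making explicit: at the maximal parameter $s^*$ your lift $\gamma$ is a priori defined only on $[0,s^*)$; the uniform bound $\norm{\gamma'}\le M\norm{q-F(y_0)}<r_0$ makes $\gamma$ Lipschitz, hence it extends continuously to $s^*$ with $\gamma(s^*)\in B(y_0,r_0)\subset\mathcal{N}_{r_0}(\O^*)$, after which openness of $V$ lets you continue past $s^*$ --- this is evidently what you intended, and it is not a gap.
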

\begin{proof}
  Fix $x\in \overline \O$, and let us write 
\begin{align*}
 S_0(y):&=-\nabla_xc_0(x, y),\quad
 S(y):=-\nabla_xc(x, y).
\end{align*}
By Lemma \ref{lem: bitwist preserved} (2), $S$ and $S_0$ are both homeomorphisms on $\overline{\mathcal{N}_{r_0}(\O^*)}$.

Suppose that $q\in \mathcal{N}_{\frac{r_0}{2\sup \Norm{(D^2_{x, y}c_0)^{-1}}}}(S_0(\overline{\O^*}))\setminus S(\overline{\mathcal{N}_{r_0}(\O^*)})$. Then there exists $y\in \overline{\O^*}$ such that $\norm{q-S_0(y)}<\frac{r_0}{2\sup \Norm{(D^2_{x, y}c_0)^{-1}}}$. Also since $\mathcal{N}_{r_0}(\O^*)$ has  $C^1$ boundary and $S$ is a homeomorphism, there is an $s\in [0, 1)$ such that $q_s:=(1-s)S(y)+s q\in \partial S(\mathcal{N}_{r_0}(\O^*))=S(\partial \mathcal{N}_{r_0}(\O^*))$. In particular, there exists $y_s\in \partial  \mathcal{N}_{r_0}(\O^*)$ such that  $q_s=S(y_s)$. Then, 
\begin{align*} 
& \frac{r_0}{2\sup \Norm{(D^2_{x, y}c_0)^{-1}}}+\Norm{S_0-S}_{C^0(\overline\O)}\geq\norm{q-S(y)}> \norm{q_s-S(y)} \\
& =\norm{S(y_s)-S(y)}\\
&\geq \norm{S_0(y)-S_0(y_s)}-\norm{S_0(y_s)-S(y_s)}-\norm{S_0(y)-S(y)}\\
&\geq [S_0^{-1}]^{-1}_{C^{0, 1}(\overline{S_0(\mathcal{N}_{r_0}(\O^*))})}\norm{y-y_s}-2\Norm{S_0-S}_{C^0(\overline\O)}\\
&\geq  [S_0^{-1}]^{-1}_{C^{0, 1}(\overline{S_0(\mathcal{N}_{r_0}(\O^*))})}r_0-2\Norm{S_0-S}_{C^0(\overline\O)},
\end{align*}
but this is a contradiction after rearranging since $[S_0^{-1}]_{C^{0, 1}(\overline{S_0(\mathcal{N}_{r_0}(\O^*))})}\leq \sup \Norm{(D^2_{x, y}c_0)^{-1}}$ and $\Norm{S_0-S}_{C^0(\overline\O)}\leq \Norm{c-c_0}_{C^4(\overline\O\times \overline{\mathcal{N}_{r_0}(\O^*)})}$.
\end{proof}
Finally, we use an implicit function theorem argument to show suitable initial conditions for a cost $c$ exist given a ``good'' initial condition for $c_0$.
\begin{Lemma}\label{lem : initial}
Suppose $u_0\in C^{2, \alpha}(\overline\O)$ for some $\alpha\in (0, 1]$, is strictly $c_0$-convex, is locally, uniformly $c_0$-convex,  and satisfies $\int_{\O} u_0(x) \ dx = 0$. Assume, in addition, that $T^{c_0, u_0}$ is a homeomorphism between $\overline \O$ and $\overline {\O^*}$, and $\det DT^{c_0, u_0}\neq 0$ on $\overline \O$. Then there exists $R_1>0$ depending on the structure of the problem and $u_0$, and a continuous mapping $\Psi: B^{C^4(\overline \O\times \overline{\mathcal{N}_{r_0}(\O^*)})}_{R_1}(c_0)\to C^{2, \alpha}(\overline \O)$ such that for any $c\in B^{C^4(\overline \O\times \overline{\mathcal{N}_{r_0}(\O^*)})}_{R_1}(c_0)$,
\begin{align}
&\Psi(c)\text{ is strictly }c\text{-convex and locally, uniformly }c\text{-convex}.\label{eqn: psi(c) convex}\\
&T^{c, \Psi(c)}\text{ is a homeomorphism between }\overline \O\text{ and }\overline {\O^*}.\label{eqn: psi(c) homeo}
\end{align}

\end{Lemma}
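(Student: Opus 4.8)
The plan is to realize $\Psi(c)$ as the locally unique solution of a \emph{linear} elliptic boundary value problem whose coefficients and right-hand side depend continuously on $c$ and which is solved by $u_0$ when $c=c_0$, and then to deduce \eqref{eqn: psi(c) convex} and \eqref{eqn: psi(c) homeo} by perturbation. Write $\theta>0$ for a uniform lower bound for $W^{c_0,u_0}=D^2u_0-A^{c_0}(\cdot,\nabla u_0)$ on $\overline\O$, which exists since $u_0$ is locally, uniformly $c_0$-convex.

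\emph{Setting up an implicit function theorem.} Because $T^{c_0,u_0}$ is a homeomorphism of $\overline\O$ onto $\overline{\O^*}$, it carries $\partial\O$ onto $\partial\O^*$, so $\G^{c_0}(x,\nabla u_0(x))=\hstar(T^{c_0,u_0}(x))=0$ on $\partial\O$; moreover the convexity hypotheses \eqref{cconvexity}--\eqref{cstarconvexity} force the field $\beta^{c_0,u_0}(x)=\nabla_p\G^{c_0}(x,p)\big|_{p=\nabla u_0(x)}$ to be uniformly oblique, $\inner{\beta^{c_0,u_0}(x)}{\nu(x)}\ge c_\ast>0$ on $\partial\O$ (the same pointwise computation that yields obliqueness for the flow; cf.\ \cite{Kitagawa12}). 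Consider
\[
\Phi(u,c):=\Bigl(\,\Delta u-u-(\Delta u_0-u_0),\ \ \G^c(\cdot,\nabla u)\big|_{\partial\O}\,\Bigr),
\]
as a map $C^{2,\alpha}(\overline\O)\times B_R(c_0)\to C^{0,\alpha}(\overline\O)\times C^{1,\alpha}(\partial\O)$, where $B_R(c_0)$ is a small ball in $C^4(\overline\O\times\overline{\mathcal{N}_{r_0}(\O^*)})$. Then $\Phi(u_0,c_0)=0$; $\Phi$ depends continuously on $c$ and is continuously Fréchet differentiable in $u$ (the boundary component is the composition of $\nabla u\in C^{1,\alpha}$ with $p\mapsto\hstar(\exp^c_x(p))$, which near the relevant values is as smooth as $\exp^c_x$, i.e.\ $C^{3,\alpha}$, since $c\in C^{4,\alpha}$ and $\hstar$ is smooth near $\partial\O^*$). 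Its partial derivative $D_u\Phi(u_0,c_0)\colon v\mapsto(\Delta v-v,\ \beta^{c_0,u_0}\!\cdot\nabla v|_{\partial\O})$ is a Banach space isomorphism: the oblique operator $(\Delta-1,\ \partial_{\beta^{c_0,u_0}})$ is Fredholm of index $0$ by Schauder theory, and it has trivial kernel because the nonpositive zeroth order coefficient, the strong maximum principle, and the Hopf lemma (applicable by obliqueness) force any $v$ with $\Delta v=v$, $\partial_{\beta^{c_0,u_0}}v=0$ to vanish. The implicit function theorem then produces $R_1\in(0,R]$ and a continuous map $\Psi\colon B_{R_1}(c_0)\to C^{2,\alpha}(\overline\O)$ with $\Psi(c_0)=u_0$ and $\Phi(\Psi(c),c)=0$; in particular $\G^c(x,\nabla\Psi(c)(x))=0$ on $\partial\O$ and $\Norm{\Psi(c)-u_0}_{C^{2,\alpha}(\overline\O)}\to0$ as $c\to c_0$.

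\emph{Local uniform $c$-convexity and the image of $T^{c,\Psi(c)}$.} Since $A^c(x,p)=-D^2_xc(x,\exp^c_x(p))$ depends continuously on $(c,x,p)$ (using $c\in C^4$ and Lemma \ref{lem: neighborhood twist} to keep $\nabla\Psi(c)(x)$ in the domain of $\exp^c_x$), after shrinking $R_1$ we get $W^{c,\Psi(c)}=D^2\Psi(c)-A^c(\cdot,\nabla\Psi(c))\ge\tfrac{\theta}{2}\Id>0$ on $\overline\O$, so $\Psi(c)$ is locally, uniformly $c$-convex; and from $DT^{c,\Psi(c)}=-(D^2_{x,y}c)^{-1}W^{c,\Psi(c)}$ together with \eqref{mixedHessianinvertible} (valid for $c$ by Lemma \ref{lem: bitwist preserved}(2)) we get $\det DT^{c,\Psi(c)}\ne0$ on $\overline\O$. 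Shrinking $R_1$ so that also $\Norm{\Psi(c)-u_0}_{C^1(\overline\O)}<R_0$, Lemma \ref{lem: bitwist preserved}(3) gives that $T^{c,\Psi(c)}$ is a homeomorphism of $\overline\O$ onto its image; the boundary condition forces $T^{c,\Psi(c)}(\partial\O)\subset\{\hstar=0\}=\partial\O^*$, and since the image varies continuously with $c$ and equals $\overline{\O^*}$ at $c=c_0$, a degree argument identifies $T^{c,\Psi(c)}(\overline\O)=\overline{\O^*}$. This is \eqref{eqn: psi(c) homeo}.

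\emph{Strict $c$-convexity.} For \eqref{eqn: psi(c) convex} it remains to prove strictness, and this is where strictness of the $c_0$-convexity of $u_0$ enters. Suppose the conclusion fails along some $c_k\to c_0$: for each $k$ there are $x_k\ne z_k$ in $\overline\O$ with $\Psi(c_k)(z_k)\le v_k(z_k)$, where, with $y_k=T^{c_k,\Psi(c_k)}(x_k)$,
\[
v_k(z):=-c_k(z,y_k)+\Psi(c_k)(x_k)+c_k(x_k,y_k).
\]
The function $\phi_k:=\Psi(c_k)-v_k$ satisfies $\phi_k(x_k)=0$ and $\nabla\phi_k(x_k)=0$ (the gradient identity from \eqref{A1}), and $D^2\phi_k\ge\tfrac{\theta}{4}\Id$ on a ball $B_r(x_k)$ with $r>0$ independent of $k$ (from the uniform lower bound on $W^{c_k,\Psi(c_k)}$ at $x_k$ and uniform $C^2$ control of $\phi_k$), hence $\phi_k\ge\tfrac{\theta}{8}\norm{\cdot-x_k}^2>0$ on $B_r(x_k)\setminus\{x_k\}$; therefore $\norm{z_k-x_k}\ge r$ for all $k$. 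Passing to a subsequence, $x_k\to x_\infty$, $z_k\to z_\infty$ with $\norm{z_\infty-x_\infty}\ge r$, and using $\Psi(c_k)\to u_0$ in $C^1$, $c_k\to c_0$, and $y_k\to y_\infty=T^{c_0,u_0}(x_\infty)$, we obtain in the limit $u_0(z_\infty)\le -c_0(z_\infty,y_\infty)+u_0(x_\infty)+c_0(x_\infty,y_\infty)$ with $z_\infty\ne x_\infty$ and $y_\infty=T^{c_0,u_0}(x_\infty)$, contradicting the strict $c_0$-convexity of $u_0$. Hence, for $c$ in a (possibly smaller) ball $B_{R_1}(c_0)$, the function $z\mapsto -c(z,T^{c,\Psi(c)}(x))+\Psi(c)(x)+c(x,T^{c,\Psi(c)}(x))$ is a strict supporting function for $\Psi(c)$ at every $x\in\overline\O$, which is exactly \eqref{eqn: psi(c) convex}.

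\emph{Expected main obstacle.} The delicate point is the last step: for non-MTW costs, local uniform $c$-convexity does not by itself give \emph{global} $c$-convexity, so one cannot argue pointwise; the compactness argument is precisely what converts the \emph{strict} $c_0$-convexity of $u_0$ and the uniform second-order lower bound into strict $c$-convexity for nearby $c$. A smaller technical point is the choice of linear operator in the first step — inserting the $-u$ term makes the linearized oblique derivative problem an outright isomorphism, avoiding the solvability (compatibility) condition that would otherwise require an auxiliary free parameter and a normalization of $\Psi(c)$.
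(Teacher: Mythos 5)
Your overall architecture is the same as the paper's: an implicit function theorem at $(c_0,u_0)$ for the pair (auxiliary interior equation, boundary operator $\G^c(\cdot,\nabla u)|_{\partial\O}$), with uniform obliqueness of $\beta^{c_0,u_0}$ used to invert the linearization and Lemma \ref{lem: neighborhood twist} used to keep $\nabla u$ in the domain of $\exp^c_x$; then continuity of $\Psi$ to transfer local uniform convexity; then the boundary condition, Lemma \ref{lem: bitwist preserved} (3), and a homotopy along $c_s=(1-s)c_0+sc$ (you phrase it via degree, the paper via a continuity/injectivity argument) to get \eqref{eqn: psi(c) homeo}. Two genuine variations are worth noting. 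First, you insert the zeroth-order term $-u$ so the linearized problem $(\Delta-1,\,\partial_{\beta^{c_0,u_0}})$ is an outright isomorphism of $C^{2,\alpha}(\overline\O)$ onto $C^{0,\alpha}(\overline\O)\times C^{1,\alpha}(\partial\O)$ (covered by \cite[Theorem 6.31]{GilbargTrudinger01} since the zeroth-order coefficients are not both zero), whereas the paper linearizes $(\Delta,\partial_\beta)$ on the mean-zero subspace; your version sidesteps the solvability/normalization issue of the pure oblique problem and is arguably cleaner at that point. Second, for strict $c$-convexity you argue qualitatively by compactness along $c_k\to c_0$, while the paper gives a quantitative two-case proof (nearby points via a Taylor expansion along the $c$-segment using the lower bound on $W^{c,\Psi(c)}$, yielding the modulus $\epsilon_1\norm{p-p_0}^2$; far points via positivity of $\underline u$ on the compact set $\mathcal{D}_0$ and $C^0$-closeness). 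Your route is shorter; the paper's yields explicit constants and a strictness modulus.

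There is, however, one step that does not go through as written: the quadratic-growth bound $\phi_k\ge\frac{\theta}{8}\norm{\cdot-x_k}^2$ on $B_r(x_k)$, which you use to force $\norm{z_k-x_k}\ge r$. It rests on a Taylor expansion along the straight segment from $x_k$ to $z$, but $\O$ is not assumed convex in the Euclidean sense (only uniformly $c_0$-convex with respect to $\O^*$), so when $x_k$ lies near a non-convex portion of $\partial\O$ the segment can exit $\overline\O$, where $\phi_k$ --- built from $\Psi(c_k)$ and $c_k(\cdot,y_k)$, both defined only for $x\in\overline\O$ --- is undefined and its Hessian uncontrolled. This is precisely why the paper runs its expansion along the $c$-segment $x(s)=\exp^{c^*}_{y_0}((1-s)p_0+sp)$, which stays in $\overline\O$ because $-\nabla_y c(\overline\O,y_0)$ is convex by Lemma \ref{lem: bitwist preserved} (1). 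The gap is repairable --- replace straight segments by $c$-segments (essentially reproducing the paper's Case 1 computation), or work with uniformly bounded $C^{2,\alpha}$ extensions of $\Psi(c_k)$ and of $c_k(\cdot,y_k)$ past $\partial\O$ --- but without such a fix the uniform separation $\norm{z_\infty-x_\infty}\ge r$, and hence the contradiction with strict $c_0$-convexity, is not justified.
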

\begin{proof}
For ease of notation, during this proof we will supress the first variable in the notation for $G^c$. Define $\mathcal{B}_1:=\overline{B^{C^4(\overline \O\times \overline{\mathcal{N}_r(\O^*)})}_{\tilde{r}_0}(c_0)}$, where $\tilde{r}_0<\min\{R_0, \frac{r_0}{6\sup \Norm{(D^2_{x, y}c_0)^{-1}}}\}$ and $R_0$ is the radius obtained from (1)-(3) of Lemma \ref{lem: bitwist preserved}, hence depends on the structure of the problem and $u_0$.

Now let $\mathcal{B}_2:=\{u\in \overline{B^{C^{2, \alpha}(\overline \O)}_{\tilde{r}_0}(u_0)}\mid \int_\Omega u=0\}$, and define the map $\Phi: \mathcal{B}_1\times \mathcal{B}_2\to C^{0, \alpha}(\overline \O)\times C^{1, \alpha}(\partial \O)$ by
\begin{align*}
 \Phi(c, u):=(\Delta (u-u_0), G^c(\nabla u)\vert_{\partial \Omega}).
\end{align*}
Note that $\Phi(c_0,u_0) = (0,0)$. Since $T^{c_0, u_0}$ is a homeomorphism between $\overline\O$ and $\overline{\O^*}$, by Lemma \ref{lem: neighborhood twist} we have for any $x\in \overline \O$ and $u\in \mathcal{B}_2$,
\begin{align*}
 \nabla u(x)\in \mathcal{N}_{\tilde{r}_0}(\nabla u_0(x))&=\mathcal{N}_{\tilde{r}_0}(-\nabla_xc_0(x, T^{c_0, u_0}(x)))\subset -\nabla_xc(x, \overline{\mathcal{N}_{r_0}(\O^*)}),
\end{align*}
in particular $G^c(x, \nabla u(x))$ is well-defined for all $x\in \overline \O$. 
Denoting the Fr\'echet derivative of the map $u\mapsto \Phi(c_0, u)$ at $u_0$ by $D_u\Phi(c_0, u_0)$, we claim that $D_u\Phi(c_0, u_0)$ is injective; here we consider $\mathcal{B}_2$ as a closed subset of the Banach space $\mathcal{B}:=\{u\in C^{2, \alpha}(\overline \O)\mid \int_\Omega u=0\}$ with the $C^{2, \alpha}$ norm, then $u_0$ is an interior point of $\mathcal{B}_2$. First note that an explicit calculation of the Fr\'echet derivative of $\Phi$ (this requires $C^4$ smoothness of the cost $c_0$) shows that for any $\phi\in \mathcal{B}$, 
\begin{align}\label{eqn: frechet derivative}
 D_u\Phi(c_0, u_0)(\phi)=(\Delta \phi, \inner{D_pG^{c_0}(\nabla u_0)}{\nabla \phi}\vert_{\partial \Omega}).
\end{align}
By Proposition \ref{prop: nonMTW estimates} $G^{c_0}(\cdot, \nabla u_0)$ is uniformly oblique on $\partial \O$, hence by Hopf's lemma combined with standard existence theory for Poisson's equation (for example, \cite[Theorem 6.31]{GilbargTrudinger01})  we see $D_u\Phi(c_0, u_0)$ is a bijection between $\mathcal{B}$ and   $C^{0, \alpha}(\overline \O)\times C^{1, \alpha}(\partial \O)$. Thus by the implicit mapping theorem \cite[Theorem 4.B]{Zeidler86v1}, there is a bijective, continuous mapping $\Psi: B^{C^4(\overline \O\times \overline{\mathcal{N}_r(\O^*)})}_{R_1}(c_0)\to \mathcal{V}$ for some radius $R_1 > 0$ and neighborhood $\mathcal{V}$  of $u_0$ in $\mathcal{B}$ such that $\Psi(c_0) = u_0$ and $\Phi(c, \Psi(c))=(0, 0)$ for all $c \in B^{C^4(\overline \O\times \overline{\mathcal{N}_r(\O^*)})}_{R_1}(c_0)$. We may further shrink $R_1$, but it will always be in a manner that depends only on the structure of the problem and $u_0$, hence we can also assume that the size of the neighborhood $\mathcal{V}$ also depends only on these parameters. We remark here this implies that the quantities $\Norm{c}_{C^4(\overline \O\times \overline{\mathcal{N}_{r_0}(\O^*)}}$, $\Norm{u}_{C^{2, \alpha}(\overline\O)}$, and $\Norm{(D^2_{x, y}c)^{-1}}_{C^4(\overline \O\times \overline{\mathcal{N}_{r_0}(\O^*)}}$ have upper bounds that also depend only on the structure of the problem and $u_0$, provided $R_1$ is taken small enough.

Now, for any  $c\in B^{C^4(\overline \O\times \overline{\mathcal{N}_{r_0}(\O^*)})}_{R_1}(c_0)$, we have $G^c(\cdot, \nabla \Psi(c))\equiv 0 \text{ on }\partial \Omega$; this implies  $T^{c, \Psi(c)}(\partial \O)\subset \partial \O^*$. Possibly taking $R_1$ even smaller, by Lemma \ref{lem: bitwist preserved} (3) we may assume that $T^{c, \Psi(c)}$ is a homeomorphism on $\overline \O$, hence a homeomorphism on $\partial \O$. Since $\O$ is uniformly $c^*$-convex with respect to $\O^*$ by Lemma \ref{lem: bitwist preserved} (1), $\O$ is homeomorphic to a ball, in particular $\partial \O$ is connected thus we must have $T^{c, \Psi(c)}(\partial \O)= \partial \O^*$. Now suppose for some $x\in \O$ we have $T^{c, \Psi(c)}(x)\not \in \O^*$. Then writing $c_s:=(1-s)c_0+s c$, by continuity of the map $\Psi$ the curve $\{T^{c_s, \Psi(c_s)}(x)\mid s\in [0, 1]\}$ is continuous with $T^{c_0, \Psi(c_0)}(x)
\in \O^*$. Thus for some value of $s$, we will have $T^{c_s, \Psi(c_s)}(x)\in \partial \O^*$ contradicting that $T^{c_s, \Psi(c_s)}$ is a homeomorphism of $\overline \O$ with $\overline \O^*$, and of $\partial \O$ with $\partial \O^*$. Thus $T^{c, \Psi(c)}(\O)=\O^*$ proving the claim \eqref{eqn: psi(c) homeo}.

We will now show \eqref{eqn: psi(c) convex}. Fix $c\in B^{C^4(\overline \O\times \overline{\mathcal{N}_{r_0}(\O^*)})}_{R_1}(c_0)$ and write $u:=\Psi(c)$. Since $u_0$ is locally, uniformly $c_0$-convex, by continuity and compactness the smallest eigenvalues of the matrices $D^2u_0(x)-A^{c_0}(x, \nabla u_0(x))$ have a strictly positive lower bound independent of $x\in\overline\O$. Then there exists some $R>0$ depending only on the structure of the problem and $u_0$ such that $D^2u_0(x)-A^{c_0}(x, \nabla u_0(x_0))$ also has a strictly positive bound independent of $x$ and $x_0$, whenever $\norm{x-x_0}<R$. Next, possibly shrinking $R_1$, we can assume there exists some $\epsilon_0>0$ depending only on the structure of the problem and $u_0$ such that
\begin{align}
 D^2u(x)-A^c(x, \nabla u(x_0))\geq \epsilon_0\Id,\quad \forall \ x_0, x\in\overline \O,\ \norm{x-x_0}<R.\label{eqn: loc unif c conv preserved}
\end{align}
In particular, $u$ is locally, uniformly $c$-convex. Now fix $x$, $x_0 \in \overline\O$ and let $y_0 := \exp^c_{x_0}(\nabla u(x_0))$ which belongs to $\overline{\O^*}$ by above. First suppose $x\neq x_0\in \overline \O$ is such that $\norm{-\nabla_xc(x, y_0)+\nabla_xc(x_0, y_0)}<\frac{R}{M_1}$ for an $M_1$ depending only the structure of the problem such that $\sup\Norm{(D^2_{x, y}c)^{-1}}<M_1$. Then write for $s\in [0, 1]$,
\begin{align*}
p_0 &:=-\nabla_y c(x_0, y_0)=\nabla u(x_0), \\
p & :=-\nabla_y c(x, y_0),\\
x(s)& :=\exp_{y_0}^{c^*}((1-s)p_0+s p).
\end{align*}
Note by Lemma \ref{lem: bitwist preserved} (1), the set $-\nabla_yc(\overline \O, y_0)$ is uniformly convex, hence $x(s)\in \overline\O$. A quick calculation yields
\begin{align*}
 \dot x_i(s)&=-c^{k, i}(p-p_0)_k,\\
 \ddot x_i(s)&=-c^{j, i}c_{kl, j}c^{m, k}c^{n, l}(p-p_0)_m(p-p_0)_n
\end{align*}
where all terms involving $c$ are evaluated at $(x(s), y_0)$; here subscripts before and after a comma indicate partial derivatives with respect to $x$ and $y$ respectively, and $c^{i, j}$ is the $(i, j)$-entry of the inverse of the matrix $D^2_{x, y}c$. Using this we find that for any $s\in [0, 1]$,
\begin{align*}
 \norm{x(s)-x_0}&\leq \int_0^s \norm{\dot x(s)}ds \leq \sup\Norm{(D^2_{x, y}c)^{-1}}\norm{p-p_0}<\frac{R\sup\Norm{(D^2_{x, y}c)^{-1}}}{M_1}<R.
\end{align*}
Using Taylor expansion, we thus find some $\tilde s\in [0, 1]$ such that
\begin{align}
 &u(x)-(-c(x, y_0)+c(x_0, y_0)+u(x_0)) \notag \\
= \ &\inner{\nabla u(x_0)+\nabla_xc(x_0, y_0)}{\dot x(0)}\notag\\
 & +\inner{[D^2u(x(\tilde{s}))+D^2_xc(x(\tilde{s}), y_0)]\dot x(\tilde{s})}{\dot x(\tilde{s})}+\inner{\nabla u(x(\tilde{s}))+\nabla_xc(x(\tilde{s}), y_0)}{\ddot x(\tilde{s})}\notag\\
\geq \ & \epsilon_0\norm{((-D^2_{x, y} c(x_0, y_0))^T)^{-1}(p-p_0)}^2\notag\\
 &-\sup\Norm{((D^2_{x, y}c)^T)^{-1}}\Norm{c}_{C^4(\overline \O\times \overline {\mathcal{N}_r(\O^*)})}\norm{\nabla u(x(\tilde{s}))+\nabla_xc(x(\tilde{s}), y_0)}\norm{((-D^2_{x, y} c(x(\tilde{s}), y_0))^T)^{-1}(p-p_0)}^2 \notag \\
& \label{eqn: strict c-conv1}
\end{align}
where we have used that $\nabla u(x_0)+\nabla_xc(x_0, y_0)=0$. We can also calculate
\begin{align*}
 \norm{\nabla u(x(\tilde{s}))+\nabla_xc(x(\tilde{s}), y_0)}&=\norm{\nabla u(x(\tilde{s}))-[(1-\tilde{s})p_0+\tilde{s} p]} \\
& \leq \norm{\nabla u(x(\tilde{s}))-\nabla u(x_0)}+\tilde{s}\norm{p-p_0}\\
&\leq \sup\Norm{D^2u}\sup\norm{\dot x}+\tilde{s}\norm{p-p_0}\\
& \leq (1+ \sup\Norm{D^2u}\sup\Norm{((D^2_{x, y}c)^T)^{-1}})\norm{p-p_0}\\
 &\leq \frac{R(1+ \sup\Norm{D^2u}\sup\Norm{((D^2_{x, y}c)^T)^{-1}})}{M_1}.
\end{align*}
Thus by taking $M_1$ large enough and combining with \eqref{eqn: strict c-conv1}, we obtain for some $\epsilon_1>0$ depending only on the structure of the problem and $u_0$ (note here that $\sup\Norm{D^2u}$ is controlled by the $C^2$ norm of $u_0$ and the size of the neighborhood $\mathcal{V}$)
\begin{align}
 u(x)-(-c(x, y_0)+c(x_0, y_0)+u(x_0))&\geq \epsilon_1\norm{p-p_0}^2>0.
\end{align}
Now suppose $\norm{-\nabla_xc(x, y_0)+\nabla_xc(x_0, y_0)}\geq\frac{R}{M_1}$. Since $u_0$ is strictly $c_0$-convex, the function
\begin{align*}
 \underline u(z, z_0):&=u_0(z)-(-c_0(z, T^{c_0, u_0}(z_0))+c_0(z_0, T^{c_0, u_0}(z_0))+u_0(z_0))
\end{align*}
is strictly positive on the set $\mathcal{D}_0:=\{(z, z_0)\in \overline \O\times \overline \O\mid \norm{-\nabla_xc_0(z, T^{c_0, u_0}(z_0))+\nabla_xc_0(z_0, T^{c_0, u_0}(z_0))}\geq\frac{R}{2M_1}\}$, which is seen to be compact by the continuity of $T^{c_0, u_0}$ and boundedness of $\O$. In particular, $\inf_{\mathcal{D}_0}\underline u>0$. Possibly shrinking $R_1$, we can ensure that $(x, x_0)\in \mathcal{D}_0$, hence
\begin{align*}
 u(x)-(-c(x, y_0)+c(x_0, y_0)+u(x_0))&\geq \underline{u}(x, x_0)-2(\Norm{u-u_0}_{C^0(\overline \O)}+\Norm{c-c_0}_{C^0(\overline\O)})>0,
\end{align*}
possibly shrinking $R_1$ further. This shows that $\Psi(c)$ is strictly $c$-convex, finishing the proof of \eqref{eqn: psi(c) convex}.
\end{proof}

\section{$C^2$ Estimates}\label{sec : c2estimate}
Our goal in this section is to establish global $C^2$ estimates for solutions of \eqref{PDE} independent of the time of existence $\tmax(c)$. Such estimates are essential for obtaining the infinite-time existence of the flow. In previous work of the second author, \cite{Kitagawa12}, such uniform estimates were obtained by making crucial use of the assumption that the cost function satisfies the  \eqref{A3} condition. However, since this condition is not preserved under $C^4$ perturbations of the cost function, we must revisit the proof in \cite[Section 10]{Kitagawa12}. Our strategy is to establish a dichotomy for the operator norm of the matrix $W^{c,u}$. This argument is similar in spirit to one carried out in the elliptic case by Warren in \cite{Warren11}. However, we note that the approach of \cite{Warren11} makes heavy use of the log-concavity of the source and target measures in various barrier constructions and maximum-principle type arguments. Additionally, we mention that since our condition \eqref{MTWbound} below is a perturbation of \eqref{A3}, we must necessarily take an  approach \`a la Pogorelov, using an auxiliary function similar to the one used in \cite{TrudingerWang09}. This is in sharp contrast to the elliptic $C^2$ estimates in \cite{MaTrudingerWang05} and the parabolic $C^2$ estimates in \cite{KimStreetsWarren12}, both of which rely on a stronger version of the MTW condition. In particular, we note that we are not able to prove the necessary estimates simply by choosing $w_{ij}\xi^i\xi^j$ as the auxiliary function in the proof of Proposition \ref{prop : dichotomy} below. As our argument will illustrate, additional lower order terms need to be introduced in the auxiliary function to establish the desired polynomial inequality for the quantity $\sup ||W^{c,u}||$.

Throughout this section, the cost function $c$ will remain fixed, and $u$ will be the solution of \eqref{PDE} corresponding to this fixed cost function $c$ that exists up to $\tmax(c)$; thus we omit the dependence on $c$ from various pieces of notation. We will make use of the following linearization of \eqref{PDE}:
\begin{equation}\label{linear}
\mathcal{L}\theta := w^{ij}\left(\theta_{ij} - D_{p_k} A_{ij} \theta_k \right) - D_{p_k} (\log B) \theta_k -  \dot\theta 
\end{equation}
where in the coefficients, $p = \nabla u(x,t)$; note that differentiating \eqref{PDE} in time shows  $\mathcal{L}\dot u=0$. We will also assume the key condition which replaces the \eqref{A3} condition, namely that for some $\MTWconst>0$, 
\begin{align}\label{MTWbound}
 D^2_{p_kp_\ell}A_{ij}V^iV^j \eta^k\eta^l\geq -\MTWconst \norm{V}^2\norm{\eta}^2,\quad \forall \ V\perp \eta.
\end{align}
Note that \eqref{MTWbound} makes no assumption on the quantity appearing in the left-hand side when $V, \eta$ are not orthogonal.

We begin with a barrier construction for the linear operator \eqref{linear}. This is similar to the construction of an elliptic barrier in \cite[Lemma 2.2]{JiangTrudinger14}, but instead of using the condition \eqref{A3}, we will utilize the bound \eqref{MTWbound}.
\begin{Lemma}\label{lem: barrier}
 There exists a constant $K_0>0$ depending on the structure of the problem so that, as long as $\MTWconst<K_0$, there exists a function $\psi\in C^2_xC^1_t(\overline \Omega\times [0, \tmax))$ such that for all $(x, t)\in \overline \O\times [0, \tmax)$,
\begin{align*}
 \mathcal{L}\psi(x, t)&\geq C_1\tr (W^{-1}(x, t))-C_2,\\
 0&<C_3^{-1} \leq\psi(x, t)\leq C_3,
\end{align*}
for some constants $C_1, C_2, C_3 >0$ depending on the structure of the problem, and $\Norm{u_{\text{initial}}}_{C^2(\overline{\O})}$ but independent of $\tmax$.
\end{Lemma}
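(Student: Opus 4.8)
The plan is to reduce the statement to a pointwise matrix inequality, and then to build $\psi$ out of the cost function and the geometry of the uniformly $c^*$-convex target $\O^*$.

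\emph{Step 1: reduction.} By Proposition \ref{prop: nonMTW estimates}, $\sup|\nabla u|$ and $\sup|\dot u|$ are bounded by a constant depending only on the structure and $\Norm{u_{\text{initial}}}_{C^2(\overline\O)}$, so for all $(x,t)$ the point $\nabla u(x,t)$ stays in a fixed compact subset $\mathcal{K}\subset -\nabla_x c(\overline\O,\overline{\mathcal{N}_{r_0}(\O^*)})$. Hence the coefficient tensors $D_{p_k}A^c_{ij}(x,\nabla u)$ and $D_{p_k}(\log B^c)(x,\nabla u)$ are bounded by structural constants on $\overline\O\times[0,\tmax)$. Since $(w^{ij})$ is positive definite with trace $\tr(W^{-1})$, it therefore suffices to produce $\psi\in C^2_xC^1_t(\overline\O\times[0,\tmax))$ with $0<C_3^{-1}\le\psi\le C_3$, with $\Norm{\nabla\psi}_\infty+\Norm{\dot\psi}_\infty$ bounded by a structural constant, and with the pointwise matrix inequality
\[
\psi_{ij}(x,t)-D_{p_k}A^c_{ij}(x,\nabla u(x,t))\,\psi_k(x,t)\ \ge\ C_1\,\delta_{ij}\qquad\text{on }\overline\O\times[0,\tmax);
\]
contracting against $(w^{ij})$ gives $w^{ij}(\psi_{ij}-D_{p_k}A_{ij}\psi_k)\ge C_1\tr(W^{-1})$, while $-D_{p_k}(\log B)\psi_k-\dot\psi\ge -C_2$, and adding the two yields the conclusion.

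\emph{Step 2: construction of $\psi$.} By Lemma \ref{lem: bitwist preserved}(1), \eqref{cstarconvexity} holds for $c$ with constant $\delta^*/2$, and by Lemma \ref{lem: neighborhood twist} the map $\exp^c_x$ is defined on a fixed structural neighborhood of $-\nabla_x c(x,\overline{\O^*})$ for every $x\in\overline\O$. This gives room to fix a reference target point $y_0\in\mathcal{N}_{r_0}(\O^*)\setminus\overline{\O^*}$ at a definite structural distance from $\overline{\O^*}$, with associated momenta $q_0(x):=-\nabla_x c(x,y_0)\notin -\nabla_x c(x,\overline{\O^*})$ (the last by \eqref{bi-twist}). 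I would take $\psi$ to be a large structural multiple of a uniformly $c$-convex barrier built from $y_0$ --- concretely the $c$-support-type function $x\mapsto c(x,y_0)$, or, to make the two-sided positive bounds automatic, $\psi=K-\G^c(x,\nabla u(x,t))=K-\hstar(T^{c,u}(x,t))$ with $K$ a large structural constant (recall $T^{c,u}(\cdot,t)$ maps $\overline\O$ into $\overline{\O^*}$ throughout the flow, so $\G^c(x,\nabla u)\in[-\sup_{\overline{\O^*}}|\hstar|,\,0]$). In either case $\psi$ lies between two positive structural constants and has structurally bounded space- and time-derivatives. In the second case the verification reduces, via the linearity of $\mathcal{L}$ and $\mathcal{L}(\text{const})=0$, to the identity $\mathcal{L}\psi=-\mathcal{L}\big[\G^c(x,\nabla u)\big]$, hence to showing $\mathcal{L}\big[\G^c(x,\nabla u)\big]\le -C_1\tr(W^{-1})+C_2$; if instead one uses the exponential barrier $\psi=e^{-\mu\G^c(x,\nabla u)}$ one has $\mathcal{L}\psi=\mu\psi\big(\mu\,w^{ij}G_iG_j-\mathcal{L}G\big)$ with $G=\G^c(x,\nabla u)$, and since $w^{ij}G_iG_j\ge0$ and $\psi\ge1$ the same estimate on $\mathcal{L}G$ suffices.

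\emph{Step 3: the key estimate, and the main obstacle.} To verify the matrix inequality of Step 1 (equivalently, the sign of $\mathcal{L}[\G^c(x,\nabla u)]$) I would Taylor-expand $A^c(x,\cdot)$ to second order along the straight segment of momenta joining $\nabla u(x,t)$ to $q_0(x)$; since both endpoints correspond under $\exp^c_x$ to points of $\overline{\mathcal{N}_{r_0}(\O^*)}$, this segment lies in the convex set $-\nabla_x c(x,\overline{\mathcal{N}_{r_0}(\O^*)})$ and its second-order behaviour is controlled by $D^2_{pp}A^c$. Splitting an arbitrary test direction $V$ into its component orthogonal to the segment direction --- where \eqref{MTWbound} applies, up to an error of order $\MTWconst\,\norm{V}^2$ times the (structurally bounded) squared length of the segment --- and the remaining rank-one part, the zeroth- and first-order terms of the expansion combine with the uniform $c^*$-convexity inequality \eqref{cstarconvexity} (through the second fundamental form of $\partial\O^*$) to produce a genuine positive matrix lower bound $\kappa\,\Id$ with $\kappa=\kappa(\text{structure})>0$; choosing the structural multiple defining $\psi$ large then upgrades this to $C_1\delta_{ij}$, \emph{provided} the total error, a fixed structural multiple of $\MTWconst$, is at most $\kappa/2$ --- which is precisely the hypothesis $\MTWconst<K_0$ for a structural $K_0$. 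The main obstacle is exactly this step: one must control the directions in which \eqref{MTWbound} is vacuous (the non-orthogonal rank-one part of the splitting, and the cross terms $D^2_{p_kp_\ell}A_{ij}\,a^k b^\ell V^iV^j$ with $a\ne b$, handled by polarization at the cost of further $\MTWconst$-errors) and show that the positive contribution extracted from the uniform $c^*$-convexity of $\O^*$ strictly dominates the MTW defect. This is the analogue, in the non-MTW setting, of the barrier computations of Trudinger--Wang and of \cite[Lemma 2.2]{JiangTrudinger14}, with \eqref{A3} replaced by the quantitative bound \eqref{MTWbound}.
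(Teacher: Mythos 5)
The central gap is in your Steps 1--2: your reduction demands a function $\psi$ with \emph{structurally bounded} $\nabla\psi$ and $\dot\psi$ satisfying the pointwise matrix inequality $\psi_{ij}-D_{p_k}A_{ij}(x,\nabla u)\,\psi_k\geq C_1\delta_{ij}$ uniformly in the unknown slope $p=\nabla u$, and none of your candidates can deliver this. For $\psi=M\,c(x,y_0)$ (plus a constant), multiplying by $M$ scales the Hessian term and the first-order term $D_{p_k}A_{ij}\psi_k$ equally, so no choice of $M$ creates positivity, and there is no structural reason why $c_{ij}(x,y_0)-D_{p_k}A_{ij}(x,p)\,c_k(x,y_0)\geq\kappa\,\delta_{ij}$: recall that the $c$-support functions $-c(\cdot,y_0)+z_0$ satisfy $D^2\varphi-A(x,\nabla\varphi)=0$, i.e.\ they are exactly degenerate, and \eqref{cstarconvexity} encodes only convexity of the momentum sets $-\nabla_xc(x,\O^*)$ (in the paper it is used merely to keep Taylor segments of momenta in range); it does not produce an interior Hessian-type lower bound ``through the second fundamental form of $\partial\O^*$.'' For the candidates $K-\G^c(x,\nabla u)$ or $e^{-\mu \G^c(x,\nabla u)}$, $\psi$ depends on $\nabla u$, so $\nabla\psi$ contains $D^2u$ (indeed $\nabla_x[\hstar(T^{c,u})]=\nabla\hstar(T)\,DT$ with $DT=-(D^2_{x,y}c)^{-1}W$), $\dot\psi$ contains $\nabla\dot u$, and $\psi_{ij}$ contains third derivatives of $u$; none of these is bounded a priori (bounding $D^2u$ is precisely what this lemma is for), so both your claimed bound on $\Norm{\nabla\psi}_\infty+\Norm{\dot\psi}_\infty$ and the pointwise-matrix framework collapse, and you would instead have to differentiate the equation to handle the third-order terms, which you do not do. A minor additional point: the straight segment from $\nabla u(x,t)$ to $q_0(x)=-\nabla_xc(x,y_0)$ with $y_0\notin\overline{\O^*}$ need not lie in a convex set under control, since only $-\nabla_xc(x,\overline{\O^*})$ is known to be convex, not $-\nabla_xc(x,\overline{\mathcal{N}_{r_0}(\O^*)})$.

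What is missing is the actual mechanism the paper uses. There $\psi=e^{K(U(t)-u(x,t)+\bar u(x))}$, with $\bar u$ the globally strictly $c$-convex function of \cite[Lemma 2.1]{JiangTrudinger14} and $U(t)$ the spatial average of $u(\cdot,t)$; the two-sided bounds on $\psi$ come from the gradient bound of Proposition \ref{prop: nonMTW estimates}; the positive term $\tr(W^{-1})$ is generated by comparing $u$ with $\bar u-\tfrac{\epsilon}{2}\norm{x-x_0}^2$ via the concavity of $\log\det$ together with the equation itself (this is where the $\dot u$ bound and the dependence on $\Norm{u_{\text{initial}}}_{C^2(\overline\O)}$ enter); and, crucially, the terms $w^{ii}(\bar u_i-u_i)^2\Norm{D^2_pA}$ --- exactly the directions in which \eqref{MTWbound} is vacuous --- are absorbed by the quadratic term $K^2\psi\,w^{ij}\eta_i\eta_j$ produced by the exponential of $\eta=U-u+\bar u$ (a Pogorelov-type trick), leaving only the orthogonal part, which is where the smallness hypothesis $\MTWconst<K_0$ is used. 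Your sketch correctly identifies the non-orthogonal directions as the main obstacle, but offers no absorption mechanism for them: the hoped-for positivity from \eqref{cstarconvexity} is not available, so error terms of size $\Norm{D^2_pA}$ (not of size $\MTWconst$) remain uncontrolled, and the hypothesis $\MTWconst<K_0$ alone cannot close the argument.
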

\begin{proof}
 Let $\bar u(x)$ be the function constructed in \cite[Lemma 2.1]{JiangTrudinger14} with the choice $g(x, y, z)=-c(x, y)-z$; note that the proof there does not require the  \eqref{A3} condition. Also, let 
\begin{align*}
 U(t):= \frac{1}{|\O|} \int_\Omega u(x, t) \ dx,
\end{align*}
which belongs to $C^1([0, \tmax))$ by Proposition \ref{prop: nonMTW estimates}. For any fixed $x_0\in \Omega$, define the function $\bar u_\epsilon(x):=\bar u(x)-\frac{\epsilon}{2}\norm{x-x_0}^2$. Reasoning similarly to \cite[Lemma 2.2]{JiangTrudinger14}, we have for any sufficiently small $\epsilon>0$ a constant $a_1\in \R$ independent of $\tmax$ such that
\begin{align}\label{eqn: ubar bound}
 \log(\det D^2\bar u_\epsilon-A(x, \nabla \bar u_\epsilon))\geq a_1,
\end{align}
and that $\nabla \bar u_\epsilon(x)\in -\nabla_xc(x, \Omega^*)$ for any $x\in \Omega$. Then, for some $K>0$ to be determined, we define
\begin{align*}
 \psi(x, t):=e^{K(U(t)-u(x, t)+\bar u(x))}.
\end{align*}
We claim $\psi$ satisfies uniform bounds above and below independent of $\tmax$. To show this, it suffices to establish a bound on the term $U(t)-u(x_0,t)$ for any $x_0 \in \O$ and $t \in [0,\tmax)$. So we fix such an $x_0$ and $t$, and let $y_0 := \exp^c_{x_0}(\nabla u(x_0,t))$. Let $x \in \O$ be another arbitrary point. Then for $s\in [0, 1]$, we let
\begin{align*}
p_0 &:=-\nabla_y c(x_0, y_0)=\nabla u(x_0,t), \\
p & :=-\nabla_y c(x, y_0),\\
x(s)& :=\exp_{y_0}^{c^*}((1-s)p_0+s p).
\end{align*}
Since $\dot x_i(s)=-c^{k, i}\bigg|_{(x(s), y_0)}(p-p_0)_k$, we have 
\begin{align*}
u(x,t) - u(x_0,t) & = \int_0^1 \frac{d}{ds} u(x(s),t) \ ds \\
& = \int_0^1 \left\langle \nabla u(x(s),t), \dot x (s) \right\rangle \ ds\\
& \leq \sup_{\overline \O\times [0, \tmax)}\norm{\nabla u}\sup_{\overline \O\times \overline \O^*}\Norm{(D^2_{x, y}c)^{-1}}\norm{p-p_0} \\
& \leq \sup_{\overline \O\times [0, \tmax)}\norm{\nabla u}\sup_{\overline \O\times \overline \O^*}\Norm{(D^2_{x, y}c)^{-1}}2\sup_{\overline \O\times \overline \O^*}\norm{\nabla_{y} c}.
\end{align*}
Consequently
\begin{align*}
|U(t) - u(x_0,t)|&  \leq \frac{1}{|\O|} \int_{\O} |u(x,t) - u(x_0,t)| \ dx\\
& \leq \frac{2}{\norm{\O}}\sup_{\overline \O\times [0, \tmax)}\norm{\nabla u}\sup_{\overline \O\times \overline \O^*}\Norm{(D^2_{x, y}c)^{-1}}\sup_{\overline \O\times \overline \O^*}\norm{\nabla_{y} c} \quad \text{for any } t \in [0,\tmax).
\end{align*}
Since $\bar{u}$ is uniformly bounded on $\O$, using Proposition \ref{prop: nonMTW estimates} there exists a constant $\Lambda > 0$ depending only on the structure of the problem such that
\begin{equation}\label{boundsonpsi}
e^{-K \Lambda} \leq \psi(x,t) \leq e^{K \Lambda} \quad \text{for all } (x,t) \in \overline{\O} \times [0,\tmax).
\end{equation}
Recall the linearized operator $\mathcal{L}$ defined in \eqref{linear}. Denoting 
$$\tilde{\mathcal{L}}v:=w^{ij}\left(v_{ij} - D_{p_k} A_{ij}(\cdot, \nabla u) v_k \right),$$
we have for a constant $C>0$ depending only on the structure of the problem and $\Norm{u_{\text{initial}}}_{C^2(\overline \O)}$,
\begin{align}
 \mathcal{L}\psi&=\tilde{\mathcal{L}}\psi-K\psi(\dot U-\dot u+D_{p_k}B(x, \nabla u)(\bar u_k-u_k))\notag\\
 &\geq \tilde{\mathcal{L}}\psi-CKe^{K\Lambda},\label{eqn: barrier1}
\end{align}
where we have used Proposition \ref{prop: nonMTW estimates}. Writing $\eta(x, t)=U(t)-u(x, t)+\bar u(x)$ for ease of notation, we calculate
\begin{equation}\label{eqn: barrier5}
 \tilde{\mathcal{L}}\psi=K\psi(Kw^{ij}\eta_i\eta_j+\tilde{\mathcal{L}}\eta).
\end{equation}
Then, 
\begin{align}
 \tilde{\mathcal{L}}\eta = \ &\tilde{\mathcal{L}}\left(\frac{\epsilon}{2}\norm{x-x_0}^2\right)+\tilde{\mathcal{L}}\bar u_\epsilon+\tilde{\mathcal{L}}(U(t)-u(x, t))\notag\\
 = \ &\epsilon \tr W^{-1}-\epsilon w^{ij}D_{p_k}A_{ij}(x, \nabla u)(x-x_0)_k\notag\\
 &+w^{ij}((\bar u_\epsilon)_{ij}-D_{p_k}A_{ij}(x, \nabla u)(\bar u_\epsilon)_k)-w^{ij}(u_{ij}-D_{p_k}A_{ij}(x, \nabla u)u_k)\notag\\
 = \ &\epsilon \tr W^{-1}-\epsilon w^{ij}D_{p_k}A_{ij}(x, \nabla u)(x-x_0)_k\notag\\
 &+w^{ij}([(\bar u_\epsilon)_{ij}-A_{ij}(x, \nabla \bar u_\epsilon)]-[u_{ij}-A_{ij}(x, \nabla u)])\notag\\
 &+w^{ij}(A_{ij}(x, \nabla \bar u_\epsilon)-A_{ij}(x, \nabla u)-D_{p_k}A_{ij}(x, \nabla u)((\bar u_\epsilon)_k-u_k).\label{eqn: barrier2}
\end{align}
Let us write $\circled{I}:=-\epsilon w^{ij}D_{p_k}A_{ij}(x, \nabla u)(x-x_0)_k$, and $\circled{II}$ and $\circled{III}$ respectively for the final two lines above. By concavity of $\log\det$, we obtain using \eqref{eqn: ubar bound}  and \eqref{PDE},
\begin{align}
 \circled{II}&\geq \log\det(D^2\bar u_\epsilon-A(x, \nabla \bar u_\epsilon))-\log\det(D^2 u-A(x, \nabla  u))\notag\\
 &\geq a_1-\dot u(x, t)-
 \log B(x, \nabla u)\geq -C\label{eqn: barrier3}
\end{align}
for some constant by Proposition \ref{prop: nonMTW estimates}. By the $c^*$-convexity of $\O^*$ with respect to $\O$, the point $q_s:=(1-s)\nabla \bar u_\epsilon+s \nabla u$ belongs to $-\nabla_xc(x, \O^*)$ for any $s\in [0, 1]$. Then by Taylor expanding, for some $s\in [0, 1]$ we obtain
\begin{align*}
 \circled{III}&=\frac{w^{ij}}{2}D^2_{p_kp_\ell}A_{ij}(x, q_s)((\bar u_\epsilon)_k-u_k)((\bar u_\epsilon)_\ell-u_\ell).
\end{align*}
By choosing $x=x_0$, we then obtain
\begin{align*}
 \circled{III}&=\frac{w^{ij}}{2}D^2_{p_kp_\ell}A_{ij}(x_0, q_s)(\bar u_k-u_k)(\bar u_\ell-u_\ell),
\end{align*}
note for this choice, we also have $\circled{I}=0$.

Now for ease of notation, let us write
\begin{align*}
\MTW(V, W, \eta, \xi):=D^2_{p_kp_\ell} A_{ij}(x_0, q_s)V^iW^j\eta^k\xi^\ell,
\end{align*}
and after fixing $t\in [0, \tmax)$ make a rotation of coordinates to diagonalize the matrix $W(x_0, t)$. Letting $\{e_i\}_{i=1}^n$ be the standard basis in $\Rn$ and using the above calculations, we thus  obtain
\begin{align}
 \tilde{\mathcal{L}}\eta(x_0, t)\geq & \ \epsilon \tr (W^{-1}(x_0, t))-C\notag\\
 &+\frac{1}{2}\sum_{i=1}^nw^{ii}\MTW(e_i, e_i, \nabla (\bar u-u), \nabla(\bar u-u)).\label{eqn: barrier4}
\end{align}
Denoting $\nabla^\perp_i(\bar u-u):=\nabla(\bar u-u)-(\bar u_i-u_i)e_i$, and using \eqref{MTWbound} we calculate
\begin{align*}
&\sum_{i=1}^nw^{ii}\MTW(e_i, e_i, \nabla (\bar u-u), \nabla(\bar u-u)) \\
= & \sum_{i=1}^nw^{ii}\left[\MTW(e_i, e_i, \nabla^\perp_i (\bar u-u), \nabla^\perp_i(\bar u-u))\right.\\
& \ \left.+ \ 2(\bar u-u)_i\MTW(e_i, e_i, e_i, \nabla^\perp_i(\bar u-u))+(\bar u_i-u_i)^2\MTW(e_i, e_i, e_i,e_i)\right]\\
\geq & \sum_{i=1}^nw^{ii}\left[-\MTWconst \norm{\nabla^\perp_i (\bar u-u)}^2+(\bar u_i-u_i)^2\MTW(e_i, e_i, e_i,e_i)\right.\\
& \ \left.+ \ 2\sum_{j\neq i}(\bar u_i-u_i)(\bar u_j-u_j)\MTW(e_i, e_i, e_i, e_j)\right]\\
\geq & \sum_{i=1}^nw^{ii}\left[-\MTWconst \Norm{\nabla(\bar u-u)}^2-(\bar u_i-u_i)^2\Norm{D^2_pA}-2\Norm{D^2_pA}\sum_{j\neq i}\left(\tilde\epsilon(\bar u_j-u_j)^2+\frac{1}{4\tilde\epsilon}(\bar u_i-u_i)^2\right)\right]\\
\geq & -(\MTWconst+2\tilde\epsilon\Norm{D^2_pA})\Norm{\nabla(\bar u-u)}^2\tr (W^{-1})-\Norm{D^2_pA}(1+\frac{n-1}{2\tilde\epsilon})\sum_{i=1}^nw^{ii}(\bar u_i-u_i)^2
\end{align*} 
for $\tilde{\e} > 0$ to be determined. It follows from \eqref{eqn: barrier4} that 
\begin{align*}
\tilde{\mathcal{L}}\eta(x_0, t)\geq &\ \tr (W^{-1}(x_0, t))\left[\epsilon -(\MTWconst + 2\tilde\epsilon\Norm{D^2_pA})\Norm{\nabla(\bar u-u)}^2 \right]\\
 &-\Norm{D^2_pA}\left(1+\frac{n-1}{2\tilde\epsilon}\right)\sum_{i=1}^nw^{ii}(\bar u_i-u_i)^2.
\end{align*}
We choose $\MTWconst$ and $\tilde \epsilon$ small enough so that 
$$\epsilon -(\MTWconst + 2\tilde\epsilon\Norm{D^2_pA})\Norm{\nabla(\bar u-u)}^2\geq \epsilon/2.$$
Therefore, by \eqref{eqn: barrier5}, we have
$$\tilde{\mathcal{L}}\psi(x_0,t) \geq K\psi(x_0, t)\left(\sum_{i=1}^nw^{ii}\left[K-\Norm{D^2_pA}\left(1+\frac{n-1}{2\tilde\epsilon}\right)\right](\bar u_i-u_i)^2 + \frac{\e}{2} \tr(W^{-1}(x_0,t))\right)$$
We then choose $K$ large enough so that 
$$K-\Norm{D^2_pA}\left(1+\frac{n-1}{2\tilde\epsilon}\right)\geq 0.$$
Consequently,
$$\tilde{\mathcal{L}}\psi(x_0,t)\geq K\psi(x_0, t)\frac{\e}{2} \tr(W^{-1}(x_0,t)).$$
Finally, using \eqref{eqn: barrier1}, and then \eqref{boundsonpsi}, we conclude that
\begin{align*}
\mathcal{L}\psi(x_0,t)&\geq K\psi(x_0, t)\frac{\e}{2} \tr(W^{-1}(x_0,t)) - CKe^{K\Lambda}\\
&\geq Ke^{-K\Lambda}\frac{\e}{2} \tr(W^{-1}(x_0,t)) - CKe^{K\Lambda}.
\end{align*}
This finishes the proof.
\end{proof}

We are now ready to carry out the aforementioned dichotomy argument. The following proposition shows that if the constant in \eqref{MTWbound} is sufficiently small (i.e., the cost function is ``sufficiently close to satisfying \eqref{A3}'') then the operator norm of $W(x, t)$ either has a uniform upper bound independent of $\tmax$, or must be larger than a specific value somewhere; both of these threshold values are explicit. 

\begin{Proposition}\label{prop : dichotomy}
There exists a constant $\tilde{\sigma}_0>0$ depending only on the structure of the problem and $\Norm{u_{\text{initial}}}_{C^2(\overline{\O})}$ such that, as long as $0 < \MTWconst<\tilde{\sigma}_0$, exactly one of the two following alternatives hold: for any $0 \leq T < \tmax$, either
\begin{enumerate}[(i)]
 \item $$\max_{(x, t) \in \overline{\O} \times [0,T]} ||W(x, t)|| \geq {\frac{1}{n}}\left(\frac{1}{n\MTWconst}\right)^{\frac{1}{n-1}}, \quad \text{or }$$
 \item $$\max_{(x, t)  \in \overline{\O} \times [0,T]} ||W(x, t)|| \leq \frac{1}{2{ n}}\left(\frac{1}{n\MTWconst}\right)^{\frac{1}{n-1}}.$$
\end{enumerate}
\end{Proposition}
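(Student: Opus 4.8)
Fix $T\in[0,\tmax)$ and set $M:=\max_{\overline{\O}\times[0,T]}\Norm{W(x,t)}$; since $u(\cdot,t)$ stays locally uniformly $c$-convex along the flow (otherwise $\log\det W$ in \eqref{PDE} would be undefined), $W$ is positive definite and $M$ is the supremum of its largest eigenvalue. The plan is to show $M$ cannot lie in the open interval $\bigl(\frac{1}{2n}(\frac{1}{n\MTWconst})^{1/(n-1)},\,\frac{1}{n}(\frac{1}{n\MTWconst})^{1/(n-1)}\bigr)$; since this forces one of (i), (ii) and the two ranges are plainly disjoint, exactly one holds. The workhorse is a Pogorelov-type auxiliary function
$$
 z(x,t):=\max_{\norm{\xi}=1}\log w_{\xi\xi}(x,t)+N\psi(x,t)+(\text{lower-order terms}),
$$
where $\psi$ is the barrier of Lemma \ref{lem: barrier} (available since we will take $\MTWconst<K_0$), $N$ is a large structural constant to be fixed, and the extra lower-order terms (e.g.\ a multiple of $\norm{\nabla u}^2$, and possibly a term linear in a boundary defining function) are the additional ingredients flagged in the section introduction, needed precisely because $\log w_{\xi\xi}$ alone does not close the estimate. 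As $\psi$ and $\norm{\nabla u}$ are bounded above and below by structural constants and $\Norm{u_{\text{initial}}}_{C^2(\overline\O)}$ (Proposition \ref{prop: nonMTW estimates} and Lemma \ref{lem: barrier}), $\max z$ controls $\log M$ up to such a constant. Near the maximum of $z$ one fixes a maximizing unit direction $\xi$ and treats $\log w_{\xi\xi}$ as a genuine $C^2$ function, absorbing the non-smoothness of the max into standard barrier corrections.

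I would first dispose of the non-interior maxima. If $z$ attains its maximum over $\overline{\O}\times[0,T]$ at $t=0$, then $M\le \Norm{D^2u_{\text{initial}}}+\Norm{A}\le C$ with $C$ depending only on the structure and $\Norm{u_{\text{initial}}}_{C^2(\overline\O)}$; since the right-hand threshold $\frac{1}{2n}(\frac{1}{n\MTWconst})^{1/(n-1)}\to\infty$ as $\MTWconst\to0$, taking $\MTWconst$ below a structural threshold makes $C$ smaller than it, so (ii) holds. If the maximum is attained at some $(x^*,t^*)$ with $x^*\in\partial\O$, $t^*>0$, I would invoke the boundary second–derivative machinery of \cite[Section 9]{Kitagawa12} (which uses uniform obliqueness $\inner{\beta}{\nu}\ge K_2$ from Proposition \ref{prop: nonMTW estimates} and the uniform domain $c$- and $c^*$-convexity guaranteed by Lemma \ref{lem: bitwist preserved}(1), and which does \emph{not} rely on \eqref{A3}) to get $\sup_{\partial\O\times[0,T]}\Norm{W}\le C+\tfrac12 M$; hence $M\le 2C$, and again (ii) holds for $\MTWconst$ small.

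The core is the interior case: $x^*\in\O$, $t^*>0$, where $\nabla z=0$, $D^2z\le 0$, $\dot z\ge 0$, so $\mathcal{L}z\le 0$ for the operator \eqref{linear}. Differentiating \eqref{PDE} twice in the direction $\xi$ and diagonalizing $W(x^*,t^*)$ (with $\xi=e_1$ the top eigendirection), concavity of $\log\det$ yields a nonnegative third-order term, while the only dangerous contribution is $w^{ii}D^2_{p_kp_\ell}A_{ii}\,u_{k\xi}u_{\ell\xi}$. Splitting $u_{k\xi}$ into its $e_1$-component (of size $\approx M$) and the rest (of size $O(1)$): the genuinely orthogonal piece is controlled from below by \eqref{MTWbound} as $\gtrsim-\MTWconst M^2\sum_{i\neq 1}w^{ii}\gtrsim -\MTWconst M^2\tr(W^{-1})$, the mixed and $i=1$ pieces are of lower order (the latter because $w^{11}=1/M$). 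Dividing the differentiated equation by $w_{\xi\xi}\approx M$, adding $\mathcal{L}(N\psi)\ge NC_1\tr(W^{-1})-NC_2$ from Lemma \ref{lem: barrier} and the (bounded, or $O(\tr W^{-1})$) contributions of the remaining terms, the inequality $\mathcal{L}z\le 0$ becomes, after discarding the nonnegative third-order term, $(NC_1-c\MTWconst M)\tr(W^{-1})\le C(1+M)$. Now use $\det W=B^c(x,\nabla u)e^{\dot u}$ uniformly bounded above (Proposition \ref{prop: nonMTW estimates}), which via AM--GM on the $n-1$ smallest eigenvalues gives the polynomial lower bound $\tr(W^{-1})\ge (n-1)(\det W)^{-1/(n-1)}M^{1/(n-1)}$. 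Substituting and writing $s:=M^{1/(n-1)}$ converts everything into a single $n$-th degree polynomial inequality $Q(s)\ge 0$, with positive leading coefficient $\propto\MTWconst$, positive constant term, and a negative linear coefficient of size $\propto N$; this is exactly the polynomial analyzed in Appendix \ref{polyprop}. Choosing $N$ large (structurally) and then $\MTWconst$ small, Appendix \ref{polyprop} shows $\{Q\ge 0\}\cap[0,\infty)=[0,s_1]\cup[s_2,\infty)$ with $s_1^{n-1}\le\frac{1}{2n}(\frac{1}{n\MTWconst})^{1/(n-1)}$ and $s_2^{n-1}\ge\frac{1}{n}(\frac{1}{n\MTWconst})^{1/(n-1)}$; moreover the branch $NC_1-c\MTWconst M>0$ under which the inequality was derived is exactly $M<s_2^{n-1}$ up to constants, and its complement $M\ge s_2^{n-1}$ already puts us in alternative (i). Hence $M\le s_1^{n-1}$ (alternative (ii)) or $M\ge s_2^{n-1}$ (alternative (i)), as claimed. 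The constant $\tilde\sigma_0$ is the minimum of the smallness thresholds imposed along the way: $\MTWconst<K_0$ for the barrier, the smallness needed to push the initial/boundary bounds below $\frac{1}{2n}(\frac{1}{n\MTWconst})^{1/(n-1)}$, and the smallness needed for the root-location statement of Appendix \ref{polyprop}.

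I expect the main obstacle to be the interior computation: differentiating \eqref{PDE} twice and arranging the auxiliary function and its constants so that the failure of \eqref{A3} is isolated into a single term which, after invoking the $\tr(W^{-1})$-lower bound, produces precisely a degree-$n$ polynomial whose two positive roots straddle the two explicit thresholds. Getting the lower-order terms of $z$ right (the $\norm{\nabla u}^2$-type corrections), and verifying that the boundary estimate of \cite{Kitagawa12} genuinely survives without \eqref{A3} in this perturbative setting, are the places where care is most needed.
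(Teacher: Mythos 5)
Your overall architecture (Pogorelov-type auxiliary function built from $\log w_{\xi\xi}$, the barrier $\psi$ of Lemma \ref{lem: barrier}, lower-order gradient terms; interior maximum vs.\ boundary/initial maximum; reduction to a polynomial inequality analyzed as in Appendix \ref{polyprop}) is the same as the paper's. But the interior computation, which you yourself flag as the crux, has a genuine gap. Your final inequality $(NC_1-c\,\MTWconst M)\tr(W^{-1})\le C(1+M)$ keeps only the barrier's good term $N\tr(W^{-1})$ and puts all $O(\tr W)$ contributions on the bad side; since $\tr(W^{-1})$ can be as small as $\sim M^{1/(n-1)}$ (your own AM--GM bound is sharp in order), in the gap region $M\sim \MTWconst^{-1/(n-1)}$ the left side is $\lesssim N\,M^{1/(n-1)}$ while the right side is $\sim C\,M$, so for $n\ge 3$ the inequality is \emph{satisfied} throughout the gap and excludes nothing. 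Relatedly, after substituting $s=M^{1/(n-1)}$ your inequality acquires an extra $s^{n-1}$ term (from the $C\,M$ on the right), so it is \emph{not} the polynomial $s-\sigma s^{n}-C$ of Appendix \ref{polyprop}, and the root-location statements you invoke (roots straddling $\frac1{2n}(\frac{1}{n\MTWconst})^{1/(n-1)}$ and $\frac1n(\frac{1}{n\MTWconst})^{1/(n-1)}$) do not hold for it: the region it excludes is a bounded, $\MTWconst$-independent interval of $M$, not the $\MTWconst$-dependent gap required by the proposition.

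The paper closes the estimate with two choices that your write-up inverts. First, the good term of size $\tr(W)$ is retained: the $a\norm{\nabla u}$ piece of the auxiliary function contributes $+(2a-C_1)\tr(W)$ with $a$ large, and the barrier term is used only to absorb the leftover $\tr(W^{-1})$-type garbage by taking its coefficient $\tilde C$ large (so that $C_3+\tilde{\tilde C}a-\tilde Ca\le 0$). Second, the dangerous MTW term is estimated in the \emph{opposite} direction from yours: $\lambda_n\sum_{i}\lambda_i^{-1}D^2_{p_np_n}A_{ii}\ge -\MTWconst\lambda_n\tr(W^{-1})-\Norm{D^2_pA}$ is bounded below using the \emph{upper} bound $\tr(W^{-1})\le n\tr(W)^{n-1}/\det W$ together with $\det W=B\,e^{\dot u}$ bounded below (Proposition \ref{prop: nonMTW estimates}), yielding $-C_4\MTWconst\tr(W)^n$. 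The resulting inequality at the interior maximum is exactly $\tr(W)-\MTWconst\tr(W)^n-C_5\le 0$, i.e.\ the Appendix \ref{polyprop} polynomial in the variable $\tr(W)$ (not $M^{1/(n-1)}$), whose two positive roots produce the stated thresholds after dividing by $n$; the second alternative is then propagated from the maximum point of the auxiliary function to all $(x,t,\xi)$, with $\MTWconst<e^{-(n-1)C_6}\sigma_0$ absorbing the constant. Your treatment of the boundary and $t=0$ cases is close in spirit to the paper's (which bounds $\sup_{\overline\O\times[0,T]}\Norm{W}$ by $C_7\sup_{\partial\O\times[0,T]}\Norm{W}$ and then invokes the boundary $C^2$ machinery of \cite[Section 11]{Kitagawa12}), but the interior argument as you wrote it does not yield the dichotomy and needs to be redone along the lines above.
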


\begin{proof}
Define the auxiliary function $v: \Omega\times [0, \tmax)\times \S^{n-1}\to \R$ by
\begin{align*}
 v(x, t, \xi):=\log w_{\xi\xi}(x, t)+a\norm{\nabla u(x, t)}+\tilde Ca\psi(x, t),
\end{align*}
for constants $a, \tilde C>0$ to be determined, where $\psi$ is the function from Lemma \ref{lem: barrier}. Suppose $v$ achieves a maximum at $(x_0, t_0, \xi_0)$. Let us first assume $x_0$ belongs to the interior of $\O$. By following the calculations in \cite[Theorem 10.1]{Kitagawa12}, we obtain (note the second displayed block of equations on p.148 of \cite[Theorem 10.1]{Kitagawa12} is missing a term $2au_lw^{ij}D_{x_l}A_{ij}$, which yields the term $-\tilde{\tilde{C}}a\tr(W^{-1})$ below)
\begin{align}
0&\geq \mathcal{L}v(x_0, t_0, \xi_0) \notag \\
& \geq \frac{w^{ij}(D_{p_kp_\ell}A_{ij})w_{k\xi_0}w_{l\xi_0}}{w_{\xi_0\xi_0}}+(2a-C_1)\tr(W)-C_2a-(C_3+\tilde{\tilde{C}}a-\tilde Ca)\tr(W^{-1})\notag\\
&\geq \frac{w^{ij}(D_{p_kp_\ell}A_{ij})w_{k\xi_0}w_{l\xi_0}}{w_{\xi_0\xi_0}}+(2a-C_1)\tr(W)-C_2a \notag \\
& \label{eqn: L bound}
\end{align}
where the last inequality is obtained by taking $\tilde{C}$ large enough that $C_3+\tilde{\tilde{C}}a-\tilde Ca\leq 0$, and we assume $2a - C_1 > 0$. Here, it can be seen that the constants $C_1$, $C_2$, and $C_3$ depend on upper bounds on $\nabla u$, and on up to second order derivatives of $A_{ij}$ and $\log B$. From the definitions of $A_{ij}$ and $B$, and Proposition \ref{prop: nonMTW estimates} it can thus be seen that these constants along with $\tilde C$ and $a$ can be chosen to have upper bounds depending only on the structure of the problem.

Now diagonalize $(w_{ij}(x_0, t_0))$ and let $\lambda_1\leq \ldots \leq \lambda_n$ be the associated eigenvalues; note that, by choice, $\xi_0$ is an eigenvector corresponding to $\lambda_n$. We then estimate,
\begin{align}\label{eqn: trace inverse estimate}
 \tr(W^{-1})&=\sum_{i=1}^n \lambda_i^{-1}=\frac{\sum_{i=1}^n\prod_{j\neq i}\lambda_j}{\lambda_1\cdots\lambda_n}\leq \frac{n\lambda_n^{n-1}}{\lambda_1\cdots\lambda_n}\leq \frac{n \tr(W)^{n-1}}{\det W}.
\end{align}
Taking $V=\lambda_i^{-1/2}e_i$ for $1\leq i\leq n-1$ and $\eta=e_n$ in \eqref{MTWbound} and using \eqref{eqn: trace inverse estimate} yields
\begin{align*}
 \frac{w^{ij}(D_{p_kp_\ell}A_{ij})w_{k\xi_0}w_{l\xi_0}}{w_{\xi_0\xi_0}}&=\lambda_n\sum_{i=1}^n \lambda_i^{-1} D_{p_np_n}A_{ii}\\
& \geq -\MTWconst \lambda_n\sum_{i=1}^{n-1} \lambda_i^{-1}+D^2_{p_np_n}A_{nn}\\
& \geq -\MTWconst \lambda_n\sum_{i=1}^{n} \lambda_i^{-1}-\Norm{D^2_pA}\\
&\geq-\frac{n\MTWconst\lambda_n \tr(W)^{n-1}}{\det W}-\Norm{D^2_pA}\\
& \geq -C_4\MTWconst\tr(W)^n-\Norm{D^2_pA}
\end{align*}
where $C_4$ depends on $u_0$ and the structure of the problem. Combining with \eqref{eqn: L bound} and rearranging yields
\begin{align*}
 \tr(W)-\frac{C_4\MTWconst}{2a-C_1}\tr(W)^n & \leq  \frac{C_2a+\Norm{D^2_pA}}{2a-C_1}.
\end{align*}
Taking $a$ large enough that $\frac{C_4}{2a-C_1}\leq 1$, we then obtain
\begin{align}
\tr(W) - \MTWconst\tr(W)^n - C_5 \leq 0
\end{align}
for $C_5>0$ depending only on the structure of the problem.

Let us now assume $\MTWconst \in (0,\sigma_0]$, where $\sigma_0 > 0$ is a fixed constant satisfying 
\begin{equation}\label{definitionofsigmazero}
\sigma_0 \leq K_0 \quad \text{and} \quad \frac{1}{n\sigma_0} \geq \left(\frac{2 n{^2} C_5}{n - 1} \right)^{n - 1},
\end{equation}
where $K_0$ is the constant appearing in Lemma \ref{lem: barrier}. Examining the roots of the polynomial $s\mapsto s- \MTWconst s^n-C_5$ on the half-line $\{s \geq 0\}$ (see \eqref{goodinterval} and \eqref{badinterval} in Appendix \ref{polyprop}) yields two possibilities.

{\bf Case 1:} $\tr(W(x_0, t_0))\geq \left(\frac{1}{n\MTWconst}\right)^{\frac{1}{n-1}}.$ 

In this case, we find that 
\begin{align*}
\left(\frac{1}{n\MTWconst}\right)^{\frac{1}{n-1}} \leq \tr(W(x_0, t_0))\leq n\sup_{\overline{\O} \times [0,T]} \Norm{W}.
\end{align*}

{\bf Case 2:} $\tr(W(x_0, t_0))\leq \frac{1}{2{ n}}\left(\frac{1}{n\sigma_0}\right)^{\frac{1}{n-1}}.$ 

In this case, for any $(x, t, \xi)$ we obtain (recall $\xi_0$ is an eigenvector of $(w_{ij}(x_0,t_0))$ corresponding to $\lambda_n$)
\begin{align*}
 \log w_{\xi\xi}(x, t)&\leq  v(x_0, t_0, \xi_0) \\
 & \leq \log\left(\frac{1}{2{ n}}\left(\frac{1}{n\sigma_0}\right)^{\frac{1}{n-1}}\right)+a\norm{\nabla u(x_0, t_0)}+\tilde{C}a\psi(x_0, t_0)\\
 &\leq C_6+\log\left(\frac{1}{2{ n}}\left(\frac{1}{n\sigma_0}\right)^{\frac{1}{n-1}}\right)
\end{align*}
Taking exponentials and then a supremum over $(x, t, \xi)$ yields
\begin{align*}
 \sup_{\overline{\O} \times [0,T]} \Norm{W}\leq \frac{e^{C_6}}{2{ n}} \left(\frac{1}{n\sigma_0}\right)^{\frac{1}{n-1}} \leq \frac{1}{2{ n}}\left(\frac{1}{n\MTWconst}\right)^{\frac{1}{n-1}},
\end{align*}
so long as we choose $\MTWconst < \frac{1}{e^{(n-1)C_6}}\sigma_0$. 

Let us now assume $v$ achieves a maximum at $(x_0, t_0, \xi_0)$ where $x_0 \in \partial \O$. Then for any $(x,t,\xi)$, we have
\begin{align*}
 \log w_{\xi\xi}(x, t) &\leq  v(x_0, t_0, \xi_0)  = \log w_{\xi_0\xi_0}(x_0, t_0)+a\norm{\nabla u(x_0, t_0)}+\tilde Ca\psi(x_0, t_0) \\
 & \leq C_6 +\log\left( \sup_{\partial \O \times [0,T]} \Norm{W} \right)
\end{align*}
Consequently, there exists a constant $C_7 > 0$ depending only on the structure of the problem such that
\begin{equation}\label{bdyC2bound}
\sup_{\overline{\O} \times [0,T]} \Norm{W} \leq C_7  \sup_{\partial \O \times [0,T]} \Norm{W}.
\end{equation}
Upon inspection of \cite[Section 11]{Kitagawa12}, we find that the bound \eqref{bdyC2bound} allows us to show that 
$$\sup_{\overline{\O} \times [0,T]} \Norm{W} \leq C_8,$$
where $C_8$ depends only on the structure of the problem and the $C^2$ norm of $u_{\text{initial}}$ \footnote{The dependency of $C_8$ on $u_{\text{initial}}$ is through the constant $\alpha$ used in  \cite[Equation (9.9)]{Kitagawa12}. By the final displayed inequality on  \cite[pg. 143]{Kitagawa12}, it can be seen that $\alpha$ depends only on $||u_{\text{initial}}||_{C^2(\overline{\O})}$.}.

Choosing $\tilde{\sigma}_0 > 0$ to be a constant satisfying the inequalities
$$\tilde{\sigma}_0 \leq \frac{1}{e^{(n-1)C_6}}\sigma_0\quad \text{ and } \quad C_8 \leq \frac{1}{2{ n}}\left(\frac{1}{n\tilde{\sigma}_0}\right)^{\frac{1}{n-1}},$$
(and recalling that $\sigma_0$ already satisfies \eqref{definitionofsigmazero}), we obtain the conclusion of the Proposition.

\end{proof}
Before concluding this section, we note the following consequence of Proposition \ref{prop : dichotomy}. 

\begin{Corollary}\label{cor : initialsecondderivboundimpliesboundforalltime}
Suppose \eqref{MTWbound} is satisfied with $0 < \MTWconst \leq \tilde{\sigma}_0$, where $\tilde{\sigma}_0$ is the constant appearing in Proposition \ref{prop : dichotomy}. For $\tau \in [0, \tmax)$, consider the quantity 
$$\omega(\tau):= \max_{(x,t)  \in \overline{\O} \times [0,\tau]} ||W^{c,u}(x, t)||.$$
If $\omega(0) \leq \frac{1}{2{ n}} \left(\frac{1}{n\MTWconst}\right)^{\frac{1}{n-1}}$, then $\omega(\tau) \leq  \frac{1}{2{ n}} \left(\frac{1}{n\MTWconst}\right)^{\frac{1}{n-1}}$ for all $\tau \in [0, \tmax)$.
\end{Corollary}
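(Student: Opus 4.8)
The plan is to deduce the result from the dichotomy of Proposition \ref{prop : dichotomy} together with a simple continuity argument. Set
\[
M_1 := \frac{1}{2n}\left(\frac{1}{n\MTWconst}\right)^{\frac{1}{n-1}}, \qquad M_2 := \frac{1}{n}\left(\frac{1}{n\MTWconst}\right)^{\frac{1}{n-1}},
\]
so that $M_1 < M_2$; this strict inequality is what makes alternatives (i) and (ii) genuinely mutually exclusive, so Proposition \ref{prop : dichotomy} says that for every $T \in [0,\tmax)$ we have either $\omega(T) \geq M_2$ or $\omega(T) \leq M_1$, but never $\omega(T) \in (M_1, M_2)$. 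I would first record that $\omega$ is non-decreasing and continuous on $[0,\tmax)$: since $u \in C^2_x C^1_t(\overline\O\times[0,\infty))$ and $c \in C^{4,\a}$, the matrix field $W^{c,u}(x,t) = D^2 u(x,t) - A^c(x,\nabla u(x,t))$ is continuous on $\overline\O\times[0,\tmax)$, so $\norm{W^{c,u}}$ is continuous there, and its maximum over the increasing family of compact sets $\overline\O\times[0,\tau]$ varies continuously (and monotonically) with $\tau$.

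With these two facts in hand, the argument is a connectedness/intermediate value argument. By hypothesis $\omega(0) \leq M_1 < M_2$. Suppose toward a contradiction that $\omega(\tau^*) > M_1$ for some $\tau^* \in (0,\tmax)$; then by Proposition \ref{prop : dichotomy} applied at $T = \tau^*$ the second alternative fails, so $\omega(\tau^*) \geq M_2$. Continuity of $\omega$ on $[0,\tau^*]$ together with $\omega(0) < M_2 \leq \omega(\tau^*)$ then produces, via the intermediate value theorem, a time $\tau' \in (0,\tau^*)$ with $M_1 < \omega(\tau') < M_2$, which contradicts the dichotomy applied at $T = \tau'$. Hence no such $\tau^*$ exists, i.e. $\omega(\tau) \leq M_1$ for all $\tau \in [0,\tmax)$, which is exactly the assertion. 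Equivalently, one can let $\tau_0 := \sup\{\tau \in [0,\tmax) : \omega(\tau) \leq M_1\}$ and use continuity (and monotonicity) of $\omega$ to rule out $\tau_0 < \tmax$.

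I do not anticipate a serious obstacle here: the content is entirely carried by Proposition \ref{prop : dichotomy}, and the remaining work is the soft topological argument above. The only point deserving a moment's care is the continuity (and monotonicity) of $\tau \mapsto \omega(\tau)$, which is where the assumed regularity of $u$ is used to ensure $\norm{W^{c,u}}$ is continuous up to each time slice; once that is granted, the conclusion follows immediately.
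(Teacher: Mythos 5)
Your proof is correct and follows essentially the same route as the paper: both arguments exploit the gap between the two alternatives of Proposition \ref{prop : dichotomy} together with continuity of $W^{c,u}$ in time to rule out the norm ever jumping across $\bigl(\frac{1}{2n}(\frac{1}{n\MTWconst})^{\frac{1}{n-1}}, \frac{1}{n}(\frac{1}{n\MTWconst})^{\frac{1}{n-1}}\bigr)$. The only cosmetic difference is that you establish continuity of $\omega$ and invoke the intermediate value theorem, whereas the paper argues at a first crossing time via the left limit of $\|W^{c,u}(x^*,\cdot)\|$ at a maximizing point $x^*$.
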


\begin{proof}
This follows from the compactness of $\overline{\O}$ coupled with the joint continuity of $A^c,\nabla u, D^2u$ (and consequently of $W^{c,u}$) in $(x,t)$. Indeed, since $\omega(\tau)$ is an increasing function of $\tau$, we may suppose by contradiction that there exists $\tau^* \in [0,\tmax)$ such that $\omega(\tau^*) >  \frac{1}{2{ n}} \left(\frac{1}{n\MTWconst}\right)^{\frac{1}{n-1}}$ but $\omega(\tau) \leq  \frac{1}{2{ n}} \left(\frac{1}{n\MTWconst}\right)^{\frac{1}{n-1}}$ for all $0 \leq \tau < \tau^*$. By the assumption on $\omega(0)$, we know that $\tau^* > 0$. Then there must exist $x^* \in \overline{\O}$ such that $W^{c,u}(x^*,\tau^*)  = \omega(\tau^*)$. The dichotomy imposed by Proposition \ref{prop : dichotomy} implies  $W^{c,u}(x^*,\tau^*) \geq{ \frac{1}{n}} \left(\frac{1}{n\MTWconst}\right)^{\frac{1}{n-1}}$. On the other hand, since $W^{c,u}(x,t)$ is jointly continuous in $(x,t)$ whenever $(x,t) \in \O \times (0,\tmax)$, it follows that 
$${\frac{1}{n}} \left(\frac{1}{n\MTWconst}\right)^{\frac{1}{n-1}} \leq W^{c,u}(x^*,\tau^*) = \lim\limits_{\epsilon \to 0^+}  W^{c,u}(x^*,\tau^*-\epsilon) \leq \frac{1}{2{ n}} \left(\frac{1}{n\MTWconst}\right)^{\frac{1}{n-1}},$$
which is a contradiction.
\end{proof}

\section{Proof of Theorem \ref{thm : main}}\label{sec : mainthm}

We begin by letting $u_0$ be the unique solution of the steady state problem
\begin{equation}\label{staticPDE}
\begin{cases}
\det (W^{c_0, u_0}(x)) = B^{c_0}(x,\nabla u_0(x)), & \qquad x \in \O, \\
\G^{c_0}(x,\nabla u_0(x)) = 0, & \qquad x \in \partial \O, \\
\int_{\O} u_0(x) \ dx = 0.
\end{cases}
\end{equation}
Since $c_0$ satisfies the  \eqref{A3} condition, the results of \cite{TrudingerWang09} imply $u_0 \in C^{2,\a}(\overline \O)$, is strictly $c_0$-convex, and is locally, uniformly $c_0$-convex.

We assume $||c-c_0||_{C^4(\O \times \mathcal{N}_{r_0}(\O^*))} < \hat{R}$ for some $\hat{R}$ to be determined. We first choose $\hat{R} \leq R_0$ in Lemma \ref{lem: bitwist preserved} (1)-(3). This ensures $c$ satisfies the necessary structural conditions in Subsection \ref{subsec:notation}. Next, we  choose $\hat{R} \leq R_1$ in Lemma \ref{lem : initial}; this ensures the map $\Psi: B^{C^4(\overline \O\times \overline{\mathcal{N}_{r_0}(\O^*)})}_{R_1}(c_0)\to C^{2, \alpha}(\overline \O)$ is well-defined. Recalling that $\Psi(c_0) = u_0$, we next let $R_3 > 0$ be such that if $\hat{R} \leq R_3$, then $||\Psi(c)||_{C^{2,\a}(\overline{\O})} \leq 2||u_0||_{C^{2,\a}(\overline{\O})}$ and $\max_{\overline{\O}} ||W^{c,\Psi(c)}|| \leq 2\max_{\overline{\O}}||W^{c_0,u_0}||$; this follows from the continuity of the map $\Psi$ at $c = c_0$ and the compactness of $\overline{\O}$. Now let $u$ be the solution of \eqref{PDE} with $u_{\text{initial}} = \Psi(c)$. Then $u$ satisfies the estimates in Proposition \ref{prop: nonMTW estimates} (which do not require the \eqref{A3} condition) with constants $K_1, K_2$ depending only on $c_0$ and $u_0$. Finally, we let $R_4 > 0$ be such that if $\hat{R} \leq R_4$, then $c$ satisfies \eqref{MTWbound} with a constant $\MTWconst \leq \tilde{\sigma}_0$, where $\tilde{\sigma}_0$ is the constant appearing in Proposition \ref{prop : dichotomy}, and  $\max_{\overline{\O}}||W^{c_0,u_0}|| \leq  \frac{1}{4n} \left(\frac{1}{n\MTWconst}\right)^{\frac{1}{n-1}}$. Then Corollary \ref{cor : initialsecondderivboundimpliesboundforalltime} implies $\sup_{\overline{\O} \times [0,\tmax(c))} \Norm{W^{c,u}} \leq \frac{1}{2{ n}} \left(\frac{1}{n\MTWconst}\right)^{\frac{1}{n-1}}$. Consequently, $u$ satisfies uniform second derivative estimates independent of $\tmax(c)$. We can thus invoke the arguments from \cite[Section 12]{Kitagawa12} to conclude the infinite-time existence of the solution $u$ of \eqref{PDE} and the convergence to a potential function for the optimal transport problem between $(\O, \rho)$ and $(\O^*, \rho^*)$ with cost function $c$.

\appendix

\section{Properties of an $n$-th degree polynomial}\label{polyprop}

Fix $C \geq 0$ and consider the polynomial 
$$p_{\sigma}(s) := s - \sigma s^{n} - C \quad \text{for any } \sigma \in (0,\sigma_0],$$
where $\sigma_0 > 0$ is a constant satisfying the inequality
\begin{equation}\label{assumptiononepsilon}
\frac{1}{n\sigma_0} \geq \left(\frac{2 n{ ^2} C}{n - 1} \right)^{n - 1}.
\end{equation}
On the half-line $\{s \geq 0\}$, $p_{\sigma}$ has a single critical point at $s = \hat{s}(\sigma)$ where
\begin{equation}\label{criticalpoint}
\hat{s}(\sigma)^{n - 1} =  \frac{1}{n\sigma}.
\end{equation}
Since $\sigma \leq \sigma_0$, \eqref{assumptiononepsilon} implies
$$\hat{s}(\sigma)^{n - 1} =  \frac{1}{n\sigma} \geq \frac{1}{n \sigma_0} \geq \left(\frac{2 n{ ^2}  C}{n - 1} \right)^{n - 1}.$$
Consequently, $\hat{s}(\sigma) \geq \frac{2 n{ ^2}  C}{n - 1}$. This, combined with \eqref{criticalpoint}, implies
$$p_{\sigma}(\hat{s}(\sigma)) = \hat{s}(\sigma)( 1 - \sigma \hat{s}(\sigma)^{n - 1}) -C = \hat{s}(\sigma)\left(\frac{n - 1}{n} \right) - C \geq (2n-1) C > 0$$
whenever $\sigma \in (0, \sigma_0]$. Since $p_{\sigma}(0) = -C < 0$, it follows that there exists a root $s_1(\sigma) \in (0, \hat{s}(\sigma))$ of $p_{\sigma}$. Similarly, since $\lim\limits_{s \to +\infty} p_{\sigma}(s) = -\infty$ whenever $\sigma > 0$, there exists a second root $s_2(\sigma) \in (\hat{s}(\sigma), \infty)$ of $p_{\sigma}$. By Descartes' rules of signs, these are the only two positive, real roots of $p_\sigma$.

Clearly, $s_1(\sigma) = \sigma s_1(\sigma)^n + C \geq C$ for all $\sigma \in (0, \sigma_0]$. Also, since $s_1(\sigma) < \hat{s}(\sigma)$, we have by \eqref{criticalpoint}
$$s_1(\sigma)^{n - 1} < \hat{s}(\sigma)^{n - 1} = \frac{1}{n \sigma}.$$
This implies
$$1 - \sigma s_1(\sigma)^{n - 1}  > \frac{n - 1}{n}.$$
Since $p_{\sigma}(s_1(\sigma)) = 0$, we have $s_1(\sigma)(1 - \sigma s_1(\sigma)^{n - 1}) = C$, and so by \eqref{assumptiononepsilon}
$$
s_1(\sigma) = \frac{C}{1 - \sigma s_1(\sigma)^{n-1}}  < \frac{n C}{n - 1} \leq \frac{1}{2{ n}}\left(\frac{1}{n\sigma_0}\right)^{\frac{1}{n-1}}\quad \text{for all } \sigma \in (0, \sigma_0].
$$

On the other hand, since $s_2(\sigma) > \hat{s}(\sigma)$, we have
$$s_2(\sigma)^{n-1} > \hat{s}(\sigma)^{n - 1} = \frac{1}{n \sigma}.$$
Thus, 
$$s_2(\sigma) \geq \left(\frac{1}{n\sigma}\right)^{\frac{1}{n-1}} \quad \text{for all } \sigma \in (0,\sigma_0].$$
We conclude that if $p_{\sigma}(s) \leq 0$ for some $s \geq 0$, then either
\begin{equation}\label{goodinterval}
0 \leq s \leq \frac{1}{2{ n}}\left(\frac{1}{n\sigma_0}\right)^{\frac{1}{n-1}}, \text{ or }
\end{equation}
\begin{equation}\label{badinterval}
s \geq \left(\frac{1}{n\sigma}\right)^{\frac{1}{n-1}}.
\end{equation}

\bibliography{parabolic}
\bibliographystyle{amsplain}

\end{document}